\def\A{{\mathcal A}}
\def\C{{\mathcal C}}
\def\curlyI{{\mathscr I}}
\def\L{{\mathscr L}}
\def\T{{\mathcal T}}
\def\U{{\mathcal U}}
\def\ra{\rightarrow}
\def\varep{\varepsilon}
\def\<{\langle}
\def\>{\rangle}
\def\leq{\leqslant}
\def\geq{\geqslant}
\def\fim{{\it FIM}}
\def\id{\operatorname{id}}
\def\Rat{\operatorname{Rat}}
\def\stab{\operatorname{stab}}
\def\what{\widehat}
\def\ol{\overline}
\def\cliss{closed inverse subsemigroup}
\def\clisss{closed inverse subsemigroups}
\def\clism{closed inverse submonoid}
\def\clisms{closed inverse submonoids}
\newcommand{\upset}[1]{(#1)^{\uparrow}}
\newcommand{\nobraupset}[1]{#1^{\uparrow}}
\numberwithin{equation}{section}
\title{\Large Inverse subsemigroups of finite index in finitely generated inverse semigroups}
\author{\large Amal AlAli and N.D. Gilbert
 \\
School of Mathematical and Computer Sciences\\
and the Maxwell Institute for the Mathematical Sciences,\\
Heriot-Watt University, Edinburgh EH14 4AS, U.K.\\
asa80@hw.ac.uk, N.D.Gilbert@hw.ac.uk}
\date{}
\newtheorem{theorem}{Theorem}[section]
\newtheorem{prop}[theorem]{Proposition}
\newtheorem{lemma}[theorem]{Lemma}
\newtheorem{cor}[theorem]{Corollary}
\theoremstyle{definition}
\newtheorem{remark}[theorem]{Remark}
\newtheorem{example}[theorem]{Example}
\begin{document}

\thispagestyle{empty}
\maketitle
\section{Introduction}
A generalisation of the concept of \emph{coset}, from groups to inverse semigroups, was proposed by Schein in \cite{Sch}.
There are three essential ingredients to this generalisation: firstly, cosets are only to be defined for an inverse subsemigroup $L$ of an
inverse semigroup $S$ that is \emph{closed} in the natural partial order on $S$; secondly, an element $s \in S$ will only determine a coset if
$ss^{-1} \in L$; and thirdly, the coset is finally obtained by taking the closure (again with respect to the natural partial order on
$S$) of the subset $Ls$.  The details of this  construction are presented in Section \ref{intro_cosets}.  In fact, Schein takes as his
starting point a characterization of cosets in groups due to Baer \cite{Baer} (see also \cite{Cert}): a subset $C$ of a group $G$ is a coset of some subgroup $H$ of $G$ if and only if $C$ is closed under the ternary operation $(a,b,c) \mapsto ab^{-1}c$ on $G$.  

A \cliss\ $L$ of an inverse semigroup $S$ has \emph{finite index} if and only if there are only finitely many distinct cosets of
$L$ in $S$.  In contrast to the situation in group theory, finite index can arise because of the relative paucity of possible 
coset representatives satisfying $ss^{-1} \in L$.
For example, in the free inverse monoid $\fim(a,b)$, the inverse subsemigroup $\fim(a)$ has finite index.  This fact is a
consequence of a remarkable theorem of Margolis and Meakin, characterising the \clisms\ in a free inverse monoid $\fim(X)$ with
$X$ finite:

\begin{theorem}{\cite[Theorem 3.7]{MarMea}}
\label{MMthm}
Let $X$ be a finite set, and let $L$ be a \clism\ of the free inverse monoid $\fim(X)$.  Then the following conditions are equivalent:
\begin{enumerate}
\item $L$ is recognised by a finite inverse automaton,
\item $L$ has finite index in $\fim(X)$,
\item $L$ is a recognisable subset of $\fim(X)$,
\item $L$ is a rational subset of $\fim(X)$,
\item $L$ is finitely generated as a \clism\ of $\fim(X)$.
\end{enumerate}
\end{theorem}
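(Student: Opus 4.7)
The plan is to prove the theorem as a cycle of implications, organised around the canonical object attached to a \clism\ $L\leq\fim(X)$: its \emph{Schützenberger graph} (or Munn/Stephen graph) $\Gamma(L)$, a birooted deterministic inverse word graph over $X$ whose vertex set is in bijection with the cosets of $L$, whose basepoint $v_{0}$ is the unique accepting state, and whose loops at $v_{0}$ spell precisely the elements of $L$ when read in $(X\cup X^{-1})^{*}$ and mapped to $\fim(X)$. Once this dictionary is in hand, properties (a)--(e) all translate into finiteness statements about $\Gamma(L)$ or about presentations of $L$, and the equivalences follow.

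I would run the cycle (e) $\Rightarrow$ (a) $\Rightarrow$ (b) $\Rightarrow$ (c) $\Rightarrow$ (d) $\Rightarrow$ (e). For (e) $\Rightarrow$ (a), given a finite set of generators $\{w_{1},\dots,w_{n}\}$ for $L$ as a \clism, I would start from the bouquet of $n$ loops at $v_{0}$ labelled by the $w_{i}$, apply Stallings/Stephen folding to make it deterministic and co-deterministic, and observe that folding a finite graph terminates in a finite inverse automaton that recognises $L$. Implication (a) $\Rightarrow$ (b) uses the bijection between cosets $(Ls)^{\uparrow}$ (with $ss^{-1}\in L$) and vertices $v$ of $\Gamma(L)$ reachable from $v_{0}$ by an $L$-\emph{admissible} path; finiteness of the automaton forces finiteness of the vertex set and hence of the coset set. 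Then (b) $\Rightarrow$ (c) is almost immediate: with finitely many cosets, the cosets themselves form the state set of a finite inverse automaton whose transition monoid is a finite inverse monoid recognising $L$. The step (c) $\Rightarrow$ (d) is McKnight's theorem applied to the finitely generated monoid $\fim(X)$, and (d) $\Rightarrow$ (e) extracts a finite generating set for $L$ from a rational expression by reading back the labels on a finite transducer, then closing under inversion and the natural partial order.

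The principal obstacle is (d) $\Rightarrow$ (e), because rationality of $L$ as a subset of $\fim(X)$ is about generation using the monoid operations of $\fim(X)$, while (e) asks for generation as a \emph{closed} inverse submonoid, a strictly stronger-looking task. The way I would handle this is to use the covering of $\Gamma(L)$ by the Cayley graph of $\fim(X)$: a rational subset of $\fim(X)$ lifts to a rational language over $X\cup X^{-1}$, a finite automaton $\A$ for this language pushes down to a finite birooted inverse graph via a finite-index subautomaton construction, and the edge labels of a spanning subgraph together with the labels of finitely many ``chord'' loops at $v_{0}$ provide a finite list of words generating $L$ as a \clism. A secondary delicate point is in (a) $\Rightarrow$ (b): one must be careful that distinct accepted words that determine the same coset yield the same vertex, which requires combining determinism with the filtering condition $ss^{-1}\in L$ inherent in Schein's definition of coset recalled in the introduction. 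The remaining steps are essentially soft: transition-monoid constructions, McKnight's theorem, and the standard folding calculus for inverse automata.
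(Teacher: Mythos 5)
You should first be aware that the paper does not prove Theorem \ref{MMthm} at all: it is quoted from Margolis and Meakin, and the paper's own contribution (Section \ref{fg_and_fi}) is to re-derive the implications (a) $\Leftrightarrow$ (b) $\Leftrightarrow$ (c) $\Rightarrow$ (d) $\Rightarrow$ (e) for an \emph{arbitrary} finitely generated inverse monoid, by quite different means from yours: the coset automaton together with Reutenauer's minimality result for connected inverse automata (Lemma \ref{coset_aut}), the Myhill--Nerode theorem for (c) $\Rightarrow$ (b), McKnight's theorem for (c) $\Rightarrow$ (d), and a pumping-lemma argument producing the finite set $U = \{uvu^{-1} : |uv| \leq C,\ (uvu^{-1})\theta \in L\}$ for (d) $\Rightarrow$ (e) (Theorem \ref{rational gen}). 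Your Sch\"utzenberger-graph/folding approach is essentially Margolis and Meakin's original route, and you correctly isolate (e) $\Rightarrow$ (a) as the step that is special to $\fim(X)$: folding the bouquet of generator loops computes the closed inverse submonoid generated by the $w_i$ only because of the Munn-tree description of $\fim(X)$, and indeed the paper's Example \ref{f2ab} shows this implication fails in a general finitely generated inverse monoid. What the paper's route buys is generality; what yours buys is the one implication the paper cannot prove in general.

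The concrete soft spot in your proposal is (d) $\Rightarrow$ (e). A rational subset of $\fim(X)$ gives you a rational language $R \subseteq (X \cup X^{-1})^*$ with $R\theta = L$ (or with $R\theta = (Y \cup Y^{-1})^*$), but the finite automaton recognising $R$ is an \emph{ordinary} automaton, not an inverse one, and your claim that it ``pushes down to a finite birooted inverse graph via a finite-index subautomaton construction'' is not a construction: folding such an automaton can change its language, and there is no reason its loop language should map onto $L$. The spanning-tree-plus-chords idea is the right shape, but to make it work you must bypass inverse automata entirely and argue directly in $(X \cup X^{-1})^*$: decompose a long accepted word as $w = uvz$ by pumping, check that $(uvu^{-1})\theta \in L$ using closedness of $L$ and the inequality $(uvu^{-1})\theta \geq (uvz)\theta\,((uz)\theta)^{-1}$, and then use $w\theta \geq (uvu^{-1})\theta\,(uz)\theta$ together with induction on $|w|$ to conclude that the finitely many short conjugates $uvu^{-1}$ generate $L$ as a \clism. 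That order-theoretic manipulation with the idempotents $zz^{-1}$ and $u^{-1}u$ is the actual content of the step, and it is missing from your sketch; without it the cycle does not close. The remaining steps of your plan ((e) $\Rightarrow$ (a) by folding, (a) $\Rightarrow$ (b) via the coset/vertex bijection, (b) $\Rightarrow$ (c) via the action on cosets, (c) $\Rightarrow$ (d) by McKnight) are sound in outline, modulo quoting the Margolis--Meakin correspondence between finite inverse automata over $X$ and finite-index \clisms\ of $\fim(X)$.
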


Condition (e) of Theorem \ref{MMthm} asserts the existence of a finite set $Y \subset \fim(X)$ such that $L$ is equal to the \clism\ generated by
$Y$.  The original statement of the  theorem in \cite{MarMea} includes an extra condition related to immersions of finite graphs, which we have omitted. 

Our aims in the present paper are to present some basic facts about \clisss\ of finite index, and to study the relationships between the
conditions given in Theorem \ref{MMthm} when $\fim(X)$ is replaced by an arbitrary inverse semigroup or monoid.

In Section \ref{intro_cosets} we give an introduction to the concept of cosets in inverse semigroups, and the action of an inverse 
semigroup on a set of cosets, based closely on the ideas of Schein \cite{Sch2, Sch}.  We establish an \emph{index formula},
relating the indices $[S:K], [S:H]$ and $[H:K]$ for closed inverse subsemigroups $H,K$ of an inverse semigroup $S$ with
$E(S) \subseteq K \subseteq H$ in Theorem \ref{indexformula}, and an analogue of M. Hall's Theorem for groups,
that in a free group of finite rank, there are only finitely many subgroups of a fixed finite index, in Theorem \ref{Hall's theorem}.

Our work based on Theorem \ref{MMthm} occupies section \ref{fg_and_fi}, and is summarised in Theorem \ref{gen_MMthm}.
We show that, in an arbitrary finitey generated inverse monoid $M$, a closed inverse submonoid has finite index if and only if it is recognisable, in which case it is rational and finitely generated as a closed inverse submonoid, but that finite generation (as a 
closed inverse submonoid) is a strictly weaker property.  This is not a surprise, since any inverse semigroup with a zero is
finitely generated in the closed sense.

The authors thank Mark Lawson and Rick Thomas for very helpful comments, and in
particular for their shrewd scrutiny of Lemma \ref{star-height}.

\section{Cosets of closed inverse subsemigroups}
\label{intro_cosets}
Let $S$ be an inverse semigroup with semilattice of idempotents $E(S)$.  Recall that the \emph{natural partial order} on $S$ is defined by
\[ s \leq t \Longleftrightarrow \text{there exists} \; e \in E(S) \; \text{such that} \; s=et \,.\]
A subset $A \subseteq S$ is \emph{closed} if, whenever $a \in A$ and $a \leq s$, then $s \in A$.  The closure $\nobraupset{B}$ of a subset
$B \subseteq S$ is defined as
\[ \nobraupset{B} = \{ s \in S : s \geq b \; \text{for some} \; b \in B \}\,.\]
A subset $L$ of $S$ is \emph{full} if $E(S) \subseteq L$.

An {\em atlas} in $S$ is a subset $A \subseteq S$ such that $AA^{-1}A \subseteq A$:
that is, $A$ is closed under the {\em heap} ternary operation $\< a,b,c \> = ab^{-1}c$ (see \cite{Baer}).  Since, for all $a \in A$ we have
$\<a,a,a\>=a$, we see that $A$ is an atlas if and only if $AA^{-1}A=A$.
%
%
A {\em coset} $C$ in $S$ is a closed atlas: that is, $C$ is both upwards closed in the natural partial order on $S$ and
is closed under the heap operation $\< \dotsm \>$.

Let $X$ be a set and $\curlyI(X)$ its symmetric inverse monoid.  Let $\rho : S \ra \curlyI(X)$ be a faithful representation of $S$
on $X$, and write $x(s \rho)$ as $x \lhd s$. 

The principal characterisations of cosets that we need are due to Schein:

\begin{theorem}{\cite[Theorem 3.]{Sch}}
\label{cosets}
Let $C$ be a non-empty subset of an inverse semigroup $S$.  Then the following are equivalent:
\begin{enumerate}
\item $C$ is a coset,
\item there exists a closed inverse subsemigroup $L$ of $S$ such that, for all $s \in C$, we have $ss^{-1} \in L$ and
$C=\upset{Ls}$.
\item there exists a closed inverse subsemigroup $K$ of $S$ such that, for all $s \in C$, we have $s^{-1}s \in K$ and
$C=\upset{sK}$.
\end{enumerate}
\end{theorem}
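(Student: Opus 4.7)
The plan is to prove the cycle (b) $\Rightarrow$ (a) $\Rightarrow$ (b), and then obtain (a) $\Leftrightarrow$ (c) by replacing $C$ with $C^{-1}$, which is again a coset via the identity $\langle x,y,z\rangle^{-1} = \langle z^{-1}, y^{-1}, x^{-1}\rangle$ together with inversion being order-preserving. The key structural tools throughout are: compatibility of the natural partial order on $S$ with both products and inversion, and the elementary fact that $xe \leq x$ whenever $e$ is an idempotent (which holds because $xe = (xex^{-1})x$ and $xex^{-1}$ is idempotent by commutativity of idempotents in an inverse semigroup).

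For (b) $\Rightarrow$ (a), upward closure of $\upset{Ls}$ is immediate. For the heap closure, take $a,b,c \in C$ with $\ell_1 s \leq a$, $\ell_2 s \leq b$, $\ell_3 s \leq c$ where $\ell_i \in L$, and apply order-compatibility to obtain
$$ab^{-1}c \geq \ell_1 ss^{-1}\ell_2^{-1}\ell_3 s = (\ell_1 ss^{-1}\ell_2^{-1}\ell_3)s \in Ls,$$
since $ss^{-1} \in L$ and $L$ is a closed inverse subsemigroup; hence $ab^{-1}c \in \upset{Ls} = C$.

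The substantive direction is (a) $\Rightarrow$ (b). Fix any $s_0 \in C$ and set $L := \upset{Cs_0^{-1}}$. I then need to verify, in turn, that (i) $L$ is a closed inverse subsemigroup of $S$, (ii) $ss^{-1} \in L$ for every $s \in C$, and (iii) $C = \upset{Ls}$ for every $s \in C$. For (i), closure of $Cs_0^{-1}$ under products reduces via the atlas property to $(as_0^{-1})(bs_0^{-1}) = \langle a,s_0,b\rangle s_0^{-1} \in Cs_0^{-1}$, and closure of $L$ under inversion uses $(as_0^{-1})^{-1} = s_0 a^{-1} \geq \langle s_0,a,s_0\rangle s_0^{-1} \in Cs_0^{-1}$, where the inequality invokes $xe \leq x$. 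For (ii), since $ss_0^{-1} \in L$ and hence also $s_0 s^{-1} \in L$, the product $s(s_0^{-1}s_0)s^{-1}$ lies in $L$ and is bounded above by $ss^{-1}$, forcing $ss^{-1} \in L$. For (iii), the inclusion $\upset{Ls} \subseteq C$ follows from $\ell s \geq as_0^{-1}s = \langle a,s_0,s\rangle \in C$ (for $\ell \geq as_0^{-1}$), while the reverse inclusion is witnessed, for each $c \in C$, by $\ell := cs_0^{-1}s_0 s^{-1} \in L$, since $\ell s = c \cdot (s_0^{-1}s_0)(s^{-1}s)$ is $c$ multiplied by an idempotent and hence $\leq c$.

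The main obstacle is the joint work needed for (i) and (ii): both require lifting elements of $Cs_0^{-1}$ to the required elements of $L$ by combining the atlas property with the $xe \leq x$ principle, and it is here that one must be careful to track exactly which products remain in $Cs_0^{-1}$ versus in its upward closure. Once these are secured, the remaining verifications are routine, and (c) follows from (b) applied to $C^{-1}$, recalling that $K^{-1} = K$ for any inverse subsemigroup $K$.
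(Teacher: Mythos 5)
Your proposal is correct and follows essentially the same route as the paper: for (a) $\Rightarrow$ (b) the paper takes $L=\upset{CC^{-1}}$, and your $L=\upset{Cs_0^{-1}}$ is the same closed set (since $ab^{-1}\geq \langle a,b,s_0\rangle s_0^{-1}$ for all $a,b\in C$), with the verifications differing only in bookkeeping. The treatments of (b) $\Rightarrow$ (a) and of (c) via symmetry likewise match the paper's argument.
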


\begin{proof}
(a) $\Longrightarrow$ (b):  Let $Q=CC^{-1} = \{ ab^{-1} : a,b \in C \}$.  Then $Q$ is an inverse subsemigroup of $S$,
since, for all $a,b,c,d \in C$ we have
\begin{itemize}
 \item $(ab^{-1})(cd^{-1})=(ab^{-1}c)d^{-1} = \< a,b,c \>d^{-1} \in Q$,
\item $(ab^{-1})^{-1} = ba^{-1} \in Q$.
\end{itemize}
Set $L=\nobraupset{Q}$: then $L$ is a closed inverse subsemigroup of $S$.  Let $s \in C$.  Obviously $ss^{-1} \in Q \subseteq L$.
Moreover, given any $c \in C$ we have
$c \geq c(s^{-1}s) = (cs^{-1})s \in Qs \subseteq Ls$, so that $C \subseteq \upset{Ls}$.  Conversely, if $x \in \upset{Ls}$,
we have $x \geq us$ for some $u \in L$, with $u \geq ab^{-1}$ for some $a,b \in C$.  Hence
$x \geq us \geq ab^{-1}c = \< a,b,c \> \in C$.  Since $C$ is closed, $x \in C$ and therefore $\upset{Ls} \subseteq C$.

(b) $\Longrightarrow$ (a): The subset $\upset{Ls}$ is a coset, since it is closed by definition, and if $h_i \in \upset{Ls}$
we have $h_i \geq t_is$ for some $t_i \in L$.  Then
$\< h_1,h_2,h_3 \> = h_1h_2^{-1}h_3 \geq t_1ss^{-1}t_2^{-1}t_3s \in Ls$ since $ss^{-1} \in L$. It follows that
$\upset{Ls}$ is closed under the heap operation $\< \dotsm \>$.

For (a) $\Longleftrightarrow$ (c): we proceed in the same way, with $K = \upset{C^{-1}C}$.
\end{proof}

For the rest of this paper, all cosets will be \emph{right} cosets, of the form $\upset{Ls}$.

\begin{prop}{\cite[Proposition 5.]{Sch}}
\label{cosets_idpt}
A coset $C$ that contains an idempotent $e \in E(S)$  is an inverse subsemigroup of $S$, and in this case $C = \upset{CC^{-1}}$.
\end{prop}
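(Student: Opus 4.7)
The plan is to use Theorem \ref{cosets} to express $C$ as $\upset{Ls}$ for the closed inverse subsemigroup $L = \upset{CC^{-1}}$ constructed in the proof of that theorem, and then exploit the freedom to choose the representative $s \in C$.

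First, I would invoke Theorem \ref{cosets}(b) with the idempotent $e \in C$ playing the role of $s$: this is legitimate because $ee^{-1} = e = ee^{-1} \in CC^{-1} \subseteq L$. Hence $C = \upset{Le}$, and moreover $e \in L$.

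Next I would establish the two inclusions $C = L$. For $C \subseteq L$: since $e \in L$ and $L$ is a subsemigroup, $Le \subseteq L$; since $L$ is closed, $\upset{Le} \subseteq \upset{L} = L$, so $C \subseteq L$. For $L \subseteq C$: given any $\ell \in L$, the element $\ell e$ satisfies $\ell e \leq \ell$ by the definition of the natural partial order (it is obtained from $\ell$ by multiplying on the right by an idempotent), so $\ell \in \upset{Le} = C$. Thus $C = L = \upset{CC^{-1}}$, and since $L$ is a closed inverse subsemigroup of $S$, so is $C$.

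There is no serious obstacle here: the proof is essentially a one-line observation once one notices that the idempotent $e$ can be used as the distinguished representative in Theorem \ref{cosets}(b). The only point that requires a small care is remembering that the natural partial order gives $\ell e \leq \ell$ whenever $e$ is an idempotent, which is what forces $L$ to lie inside $\upset{Le}$.
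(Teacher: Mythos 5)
Your proof is correct, but it takes a genuinely different route from the paper's. The paper verifies everything directly from the heap operation: for $a,b\in C$ it observes $ab \geq aeb = \langle a,e,b\rangle \in C$ and $a^{-1} \geq ea^{-1}e = \langle e,a,e\rangle \in C$, so closedness of $C$ gives closure under products and inverses, and the equality $C=\upset{CC^{-1}}$ is then obtained from the two inclusions $ab^{-1}\geq \langle a,b,e\rangle\in C$ and $x\geq xe\in CC^{-1}$. You instead feed the idempotent $e$ into Theorem \ref{cosets}(b) as the distinguished representative and prove $C=L$ by a two-inclusion argument; this exhibits $C$ at once as an already-known closed inverse subsemigroup rather than checking the axioms one by one. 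Two small caveats, neither fatal. First, the statement of Theorem \ref{cosets}(b) only asserts the existence of \emph{some} $L$ with $C=\upset{Ls}$ for all $s\in C$; your identification $L=\upset{CC^{-1}}$ relies on the particular construction inside the proof of that theorem. You could avoid this: once $C=L$ is known, $L=LL^{-1}$ for any inverse subsemigroup (since $\ell=\ell(\ell^{-1}\ell)^{-1}$), so $\upset{CC^{-1}}=\upset{L}=L=C$ follows from the statement alone. Second, the paper defines the natural partial order by \emph{left} multiplication by an idempotent, so $\ell e\leq \ell$ requires the standard equivalence with right multiplication (take the idempotent $\ell e\ell^{-1}$, so that $\ell e=(\ell e\ell^{-1})\ell$); the paper uses this silently as well, so it is a fair step, but worth flagging. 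What the paper's approach buys is complete independence from Theorem \ref{cosets}; what yours buys is brevity and a conceptual explanation of why a coset containing an idempotent is forced to coincide with its associated closed inverse subsemigroup.
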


\begin{proof} If $a,b \in C$ then $ab \geq aeb = \< a,e,b \> \in C$ and since $C$ is closed, we have $ab \in C$.
Furthermore, $a^{-1} \geq ea^{-1}e = \< e,a,e \> \in C$ and so $a^{-1} \in C$.  Hence $C$ is an inverse subsemigroup.

Now $ab^{-1} \in CC^{-1}$ and $ab^{-1} \geq ab^{-1}e = \< a,b,e \> \in C$.  Since $C$ is closed we have
$\upset{CC^{-1}} \subseteq C$.  But if $x \in C$ then $x \geq xe \in CC^{-1}$ and so $x \in \upset{CC^{-1}}$.
Therefore $C = \upset{CC^{-1}}$.
\end{proof} 

Now if $L$ is a closed inverse subsemigroup of $S$, a coset of $L$ is a subset of the form $\upset{Ls}$ where
$ss^{-1} \in L$.  Suppose that $C$ is such a coset: then Theorem \ref{cosets} associates to $C$ the closed inverse
subsemigroup $\upset{CC^{-1}}$. 

\begin{prop}{\cite[Proposition 6.]{Sch}}
\label{cosetsofL}
Let $L$ be a closed inverse subsemigroup of $S$.
\begin{enumerate}
\item Suppose that $C$ is a coset of $L$.  Then $\upset{CC^{-1}}=L$.
\item If $t \in C$ then $tt^{-1} \in L$ and $C = \upset{Lt}$.  Hence two cosets of $L$ are either disjoint or they coincide.
\item Two elements $a,b \in S$ belong to the same coset $C$ of $L$ if and only if $ab^{-1} \in L$.
\end{enumerate}
\end{prop}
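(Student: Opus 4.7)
The plan is to prove each part in turn, exploiting the characterisation of a coset $C$ as $\upset{Ls}$ for some $s$ with $ss^{-1}\in L$, given by Theorem \ref{cosets}. Part (a) goes by double inclusion. First, $s\in C$ itself, since $s=ss^{-1}s\in Ls\subseteq C$. If $a,b\in C$ then $a\geq us$ and $b\geq vs$ with $u,v\in L$, so $ab^{-1}\geq u(ss^{-1})v^{-1}\in L$; closure of $L$ then gives $\upset{CC^{-1}}\subseteq L$. Conversely, for $\ell\in L$ we have $\ell s\in Ls\subseteq C$, so $\ell ss^{-1}=(\ell s)s^{-1}\in CC^{-1}$, and since $\ell\geq \ell ss^{-1}$ we conclude $\ell\in\upset{CC^{-1}}$.

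For (b), take $t\in C$, so $t\geq us$ for some $u\in L$. Then $tt^{-1}\geq u(ss^{-1})u^{-1}$ lies in $L$, so $tt^{-1}\in L$ by closure, and $\upset{Lt}$ is itself a coset by Theorem \ref{cosets}. The inclusion $\upset{Lt}\subseteq C$ is immediate: $\ell t\geq \ell us\in Ls\subseteq C$ for any $\ell\in L$, and $C$ is closed. The crux is showing $C\subseteq\upset{Lt}$, or equivalently that $s\in\upset{Lt}$. For this, I would write $t=eus$ for some $e\in E(S)$, so that $u^{-1}t=u^{-1}eus=(u^{-1}eu)s$; since $u^{-1}eu$ is itself an idempotent (conjugates of idempotents are idempotent in an inverse semigroup, via the fact that idempotents commute), $u^{-1}t\leq s$, whence $s\geq u^{-1}t\in Lt$. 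This yields $Ls\subseteq\upset{Lt}$ and therefore $C=\upset{Ls}\subseteq\upset{Lt}$. The disjointness statement is then automatic: if $t\in C_1\cap C_2$, both cosets coincide with $\upset{Lt}$.

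Part (c) will follow cleanly from (a). The forward direction is trivial: $a,b\in C$ forces $ab^{-1}\in CC^{-1}\subseteq L$. For the converse, given $ab^{-1}\in L$, one has $ba^{-1}=(ab^{-1})^{-1}\in L$, and the product $(ba^{-1})(ab^{-1})=b(a^{-1}a)b^{-1}$ is an idempotent lying below $bb^{-1}$; closure of $L$ then gives $bb^{-1}\in L$, so $\upset{Lb}$ is a coset, containing $b=bb^{-1}b\in Lb$. Finally $a\geq ab^{-1}b\in Lb$ places $a$ in the same coset as $b$.

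The step I expect to be the main obstacle is the partial-order manipulation in (b) that allows one to switch coset representatives from $s$ to $t$. Parts (a) and (c) are essentially bookkeeping once (a) is set up, but the crucial exchange $s\geq u^{-1}t$ (from $t\geq us$) must use the definition of the natural order in terms of an idempotent factor together with stability of idempotents under conjugation; only then can the closedness of $L$ upgrade the resulting order relations to genuine membership.
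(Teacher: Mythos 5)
Parts (a) and (c) of your proposal are correct and essentially identical to the paper's own argument. The gap is in part (b), at exactly the step you flagged as the crux. From $t \geq us$ you write $t = eus$ for some $e \in E(S)$; but by the definition of the natural partial order that equation asserts $t \leq us$, the \emph{reverse} of what you know. What $t \geq us$ actually gives is $us = et$ for some idempotent $e$ (one may take $e = (us)(us)^{-1} = uss^{-1}u^{-1}$). As a result, the inequality $s \geq u^{-1}t$ you extract is false in general: in the two-element semilattice $S = \{1,f\}$ with $f<1$, take $L=S$, $s=f$, $C=\upset{Lf}=\{f,1\}$, $t=1$, $u=1$; then $t \geq us$ but $u^{-1}t = 1 \not\leq f = s$. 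A non-degenerate instance occurs in $\curlyI_3$ with $L=\stab(1)$, $s=[1\mapsto 1]$, $t=\id$, $u=\id$. A further warning sign is that your derivation of this step never invokes the hypothesis $ss^{-1}\in L$, which is essential to it.

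The repair is the paper's computation. From $us \leq t$ one has $us = (us)(us)^{-1}t = uss^{-1}u^{-1}t$; multiplying on the left by $u^{-1}$ and commuting idempotents gives
\[
u^{-1}us \;=\; u^{-1}uss^{-1}u^{-1}t \;=\; ss^{-1}u^{-1}uu^{-1}t \;=\; (ss^{-1}u^{-1})\,t .
\]
Since $ss^{-1}\in L$ and $u^{-1}\in L$, the coefficient $ss^{-1}u^{-1}$ lies in $L$, and $s \geq u^{-1}us$ because $u^{-1}u$ is idempotent; hence $s \in \upset{Lt}$. So the element of $Lt$ lying below $s$ is $(ss^{-1}u^{-1})t$, not $u^{-1}t$. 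With that substitution, the remainder of your part (b) (including the disjoint-or-equal conclusion) goes through unchanged.
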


\begin{proof}
(a) If $c_i \in C$ ($i=1,2$) then there exists $l_i \in L$ such that $c_i \geq l_is$.
Hence $c_1c_2^{-1} \geq l_1ss^{-1}l_2^{-1} \in L$, and so $CC^{-1} \subseteq L$.  Since $L$ is closed,
$\upset{CC^{-1}} \subseteq L$.  On the other hand, for any $l \in L$ we have
$l=ll^{-1}l \geq lss^{-1}l^{-1}l = (ls)(l^{-1}ls)^{-1} \in CC^{-1}$ and so $L \subseteq \upset{CC^{-1}}$.

(b) If $C = \upset{Ls}$ and $t \in C$ then, for some $l \in L$ we have $t \geq ls$.  Then 
$tt^{-1} \geq lss^{-1}l^{-1} \in L$, and since $L$ is closed, $tt^{-1} \in L$.  Moreover, if
$u \in \upset{Lt}$ then for some $k \in L$ we have $u \geq kt \geq kls$ and so $u \in \upset{Ls}$.  Hence if
$t \in \upset{Ls}$ then $\upset{Lt} \subseteq \upset{Ls}$.   Now
$ls = (ls)(ls)^{-1}t = lss^{-1}l^{-1}t$ and so $l^{-1}ls = l^{-1}lss^{-1}l^{-1}t = ss^{-1}l^{-1}t \in Lt$.
Since $s \geq l^{-1}ls$, we deduce that $s \in \upset{Lt}$.  Hence $\upset{Ls} \subseteq \upset{Lt}$.

(c)  Suppose that $a,b \in \upset{Ls}$.  Then for some $k,l \in L$ we have $a \geq ks$ and $b \geq ls$: hence
$ab^{-1} \geq kss^{-1}l^{-1} \in L$ and so $ab^{-1} \in L$.  On the other hand, suppose that $ab^{-1} \in L$.
Then $aa^{-1} \geq a(b^{-1}b)a^{-1} = (ab^{-1})(ab^{-1})^{-1} \in L$, and similarly $bb^{-1} \in L$. We note that
$a = (aa^{-1})a \in La$ and similarly $b \in Lb$. Then
$a \geq a(b^{-1}b) = (ab^{-1})b$ and so $a \in \upset{Lb}$.  As in part (b) we deduce that $\upset{La} \subset \upset{Lb}$.
By symmetry $\upset{La} = \upset{Lb}$ and this coset contains $a$ and $b$.
\end{proof}

\begin{example}
\label{Eclosed}
 Let $E$ be the semilattice of idempotents of $S$.  The property that $E$ is closed is exactly the property that $S$ is
\emph{$E$--unitary}.  In this case, for any $s \in S$, we have
\begin{align*}
\upset{Es} &= \{ t \in S : t \geq es \; \text{for some} \; e \in E \} \\
&= \{ t \in S : t \geq u \leq s \; \text{for some} \; u \in S \} \\
&= \{ t \in S : s,t \; \text{have a lower bound in} \; S \}.
\end{align*}
We see that $\upset{Es}$ is precisely the $\sigma$--class of $s$, where $\sigma$ is the minimum group congruence on $S$,
see \cite[Section 2.4]{LwBook}.  Hence every element $t \in S$ lies in a coset of $E$, and the set of cosets is in
one-to-one correspondence with the maximum group image $\what{S}$ of $S$.
\end{example}

\begin{remark}
Let  $L$ be a closed inverse subsemigroup of an inverse semigroup $S$. Then the union $U$, of all the cosets of $L$ is a subset of $S$ but need not be all of $S$, and is not always a subsemigroup of $S$.
\end{remark}

We illustrate this remark in the following example.

\begin{example}
Fix a set $X$ and recall that the \emph{Brandt semigroup} $B_X$ is defined as follows.  As a set, we have
\[ B_X = \{ (x,y) : x,y \in X \} \cup \{ 0 \} \]
with
\[ (u,v)(x,y) = 
\begin{cases}
(u,y) & \text{if} \; v=x \\
0 & \text{if} \; v \ne x
\end{cases} \]
and $0(x,y) = 0 = (x,y)0$.
The idempotents of $B_X$ are the elements $(x,x)$ for $x \in X$ and $0$.  Hence $0 \leq (x,y)$ for all $x,y \in X$ and
$(u,v) \leq (x,y)$ if and only if $(u,v)=(x,y)$.  If a closed inverse semigroup $L$ of $B_X$ contains $(x,y)$ with $x \ne y$
then $(x,y)(x,y)=0 \in L$ and so $L=B_X$.  Therefore the only proper closed inverse subsemigroups are the subsemigroups
$E_x = \{ (x,x) \}$ for $x \in X$.  An element $(x,y) \in B_X$ then determines the coset
\[ \upset{E_x(x,y)} = \{ (x,y) \} \,.\]
Hence there are $|X|$ distinct cosets of $E_x$ and their union is
\[ U = \{ (x,y) : y \in X \} \,. \]
\end{example}

\begin{prop}
\label{cosetunion}
\leavevmode
\begin{enumerate}
\item Let $L$ be a closed inverse subsemigroup of an inverse semigroup $S$ and let $U$ be the union of all the cosets of $L$ in $S$. Then
$   U= \left\{ s\in S : ss^{-1} \in L \right\} $
and therefore $U=S$ if and only if $L$ is full.
\item $U$ is a closed inverse subsemigroup of $S$ if and only if whenever $e \in E(L)$ and $s \in U$ 
then $ses^{-1} \in U$ and if,
whenever $s \in S$ with 
$ss^{-1}\in L$, then $s^{-1}s \in L \,.$
\end{enumerate}
\end{prop}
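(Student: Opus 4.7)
For part (a), the plan is to identify $U$ directly with $\{s \in S : ss^{-1} \in L\}$. One inclusion is an immediate consequence of Proposition~\ref{cosetsofL}(b): if $s$ belongs to a coset of $L$, then that coset can be written as $\upset{Ls}$ and $ss^{-1}\in L$. For the reverse inclusion, given $ss^{-1}\in L$, the subset $\upset{Ls}$ is a coset by Theorem~\ref{cosets}(b), and since $s = ss^{-1}s \in Ls$, we have $s \in \upset{Ls} \subseteq U$. The second assertion is then immediate: if $L$ is full, every $s$ satisfies $ss^{-1}\in E(S)\subseteq L$; conversely, if $U=S$, then taking $s=e\in E(S)$ yields $e = ee^{-1}\in L$.

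For part (b), I would first observe that $U$ is always closed in the natural partial order: if $s\leq t$ with $s\in U$, then $s=et$ for some $e\in E(S)$, whence $ss^{-1}=ett^{-1}\leq tt^{-1}$, and closedness of $L$ forces $tt^{-1}\in L$. Thus $U$ is a closed inverse subsemigroup precisely when it is closed under both inverses and products.

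Closure under inversion is transparent: $s\in U$ means $ss^{-1}\in L$, whereas $s^{-1}\in U$ means $s^{-1}s\in L$, so $U=U^{-1}$ is equivalent to the second condition. For closure under multiplication, the key computation is $(st)(st)^{-1}=s(tt^{-1})s^{-1}$. Writing $e=tt^{-1}$, which lies in $E(L)$ whenever $t\in U$, closure under product amounts to $ses^{-1}\in L$ for all $s\in U$ and all $e\in E(L)$. The small subtlety I want to flag is that this should be phrased in the proposition as $ses^{-1}\in U$: since idempotents of $S$ commute, a direct calculation gives $(ses^{-1})^2 = se(s^{-1}s)es^{-1} = s(s^{-1}s)es^{-1} = ses^{-1}$, so $ses^{-1}$ is idempotent and hence $ses^{-1}\in U \Longleftrightarrow ses^{-1}\in L$. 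This both shows that the first stated condition gives multiplicative closure, and in the converse direction: given $e\in E(L)$ and $s\in U$, take $t=e$, noting that $tt^{-1}=e\in L$ so $t\in U$, and then $st=se\in U$ yields $ses^{-1}\in L$, i.e., $ses^{-1}\in U$.

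The only part requiring any care is the idempotence calculation for $ses^{-1}$ and confirming that closure under product really is equivalent to the first condition (not a strictly stronger statement about arbitrary $s,t\in U$); the trick is that every $e\in E(L)$ is itself an element of $U$, so the general product case reduces to the case $t=e$.
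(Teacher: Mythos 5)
Your proposal is correct and follows essentially the same route as the paper: identify $U$ with $\{s : ss^{-1}\in L\}$, match closure under products to the condition on $ses^{-1}$ via $(st)(st)^{-1}=s(tt^{-1})s^{-1}$, and match closure under inverses to the condition on $s^{-1}s$. Your two additions — the explicit check that $U$ is always upward closed, and the observation that $ses^{-1}$ is idempotent so that $ses^{-1}\in U$ is equivalent to $ses^{-1}\in L$ — are refinements the paper leaves implicit, and they tighten the argument rather than change it.
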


\begin{proof}
(a) The coset $\upset{Lu}$ containing $u \in S$ exists if and only if $uu^{-1} \in L$.

(b) Suppose that $L$ satisfies the given conditions.  If $s,t \in U$ then $ss^{-1}, tt^{-1} \in L$ and
\[ (st)(st)^{-1} = s(tt^{-1})s^{-1} \in U \]
which implies that $st \in U$, and $s^{-1}s \in L$ which implies that $s^{-1} \in U$.  Hence $U$ is an inverse subsemigroup.
Conversely, if $U$ is an inverse subsemigroup and $s \in U$, then $s^{-1} \in U$ which implies that $s^{-1}s \in L$,
and if $s \in U$ and $e \in E(L)$ then $e \in U$ and so $se \in U$ which implies that $(se)(se)^{-1} = ses^{-1} \in U$.  
\end{proof}

\subsection{The index formula}
The \textit{index} of the closed inverse subsemigroup $L$ in an inverse semigroup $S$ is the cardinality of the set of right cosets of $L$, and is written $[S:L]$.   Note that the mapping $\upset{Ls} \rightarrow \upset{s^{-1}L}$ is a bijection from the set of right cosets to the set of left cosets.  A \emph{transversal} to $L$ in $S$ is a choice of one element from each right coset of $L$.   For a transversal $\T$,  we have the union
               $$U = \bigcup_{t \in \T}\,\upset{Lt}, $$
as in Proposition \ref{cosetunion}, and each element $u \in U$ satisfies $u \geq ht$ for some $h \in L,\, t \in \T.$

\begin{theorem}\label{indexformula}
Let $S$ be an inverse semigroup and let $H$ and $K$ be two closed inverse subsemigroups of $S$ with $K$ of finite index in $H$ and $H$ of finite index in $S$, and with $K \subseteq H$ and $K$ full in $S.$
Then $K$ has finite index in $S$ and
$$ [ S:K ] = [\, S:H ]  [\, H:K ]. $$
\end{theorem}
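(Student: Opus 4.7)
The plan is to mimic the classical group-theoretic index formula. I would fix a transversal $\T$ for the right cosets of $H$ in $S$ and a transversal $\U$ for the right cosets of $K$ in $H$, and then prove that the set of products $\{ut : u \in \U,\, t \in \T\}$ is a transversal for the right cosets of $K$ in $S$. Cardinality then yields $[S:K] = |\U|\cdot|\T| = [H:K][S:H]$.

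The first step is to verify that each product $ut$ actually determines a coset of $K$. By Proposition \ref{cosetunion}(a) this amounts to checking $(ut)(ut)^{-1}\in K$. A short calculation using commutativity of idempotents gives $(ut)(ut)^{-1}=u(tt^{-1})u^{-1}$, which is itself an idempotent of $S$, and so lies in $K$ by the fullness hypothesis $E(S)\subseteq K$.

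The next step is to show every $s \in S$ lies in some $\upset{K(ut)}$. Since $K$ is full and $K\subseteq H$, $H$ is also full, so $ss^{-1}\in H$ and Proposition \ref{cosetsofL}(b) places $s$ in a unique coset $\upset{Ht}$ with $t\in\T$; Proposition \ref{cosetsofL}(c) then gives $st^{-1}\in H$. Applying Proposition \ref{cosetsofL}(c) a second time, now to cosets of $K$ inside $H$, produces a $u\in\U$ with $(st^{-1})u^{-1}=s(ut)^{-1}\in K$, placing $s$ in $\upset{K(ut)}$.

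The heart of the argument, and the main obstacle, is uniqueness: different pairs must yield different $K$-cosets. Suppose $\upset{K(ut)}=\upset{K(u't')}$, so that $(ut)(u't')^{-1}\in K\subseteq H$. Since $ut\in\upset{Ht}$ and $u't'\in\upset{Ht'}$, the disjointness of distinct $H$-cosets from Proposition \ref{cosetsofL}(b) forces $t=t'$, and the relation simplifies to $u(tt^{-1})u'^{-1}\in K$. The key calculation is $u(tt^{-1})u'^{-1} = (u(tt^{-1})u^{-1})\cdot uu'^{-1}$ with $u(tt^{-1})u^{-1}$ idempotent, so $u(tt^{-1})u'^{-1}\leq uu'^{-1}$; since $K$ is closed, it follows that $uu'^{-1}\in K$. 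Proposition \ref{cosetsofL}(c) applied inside $H$ then puts $u$ and $u'$ in the same $K$-coset of $H$, and since $\U$ is a transversal we conclude $u=u'$.
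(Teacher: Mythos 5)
Your proof is correct and follows essentially the same route as the paper's: the same decomposition of $S$ into the cosets $\upset{Kut}$ with $u \in \U$, $t \in \T$, and the same two-stage uniqueness argument (first forcing $t=t'$ via the $H$-cosets, then $u=u'$ via closedness of $K$ and the idempotent $u(tt^{-1})u^{-1}$). The differences are cosmetic --- for instance, you invoke Proposition \ref{cosetsofL}(c) for the covering step and for $t=t'$ where the paper chases the inequalities $s \geq kut$ and $t't^{-1} \geq (u')^{-1}u't't^{-1}u^{-1}u$ directly.
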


\begin{proof}   
Since $K$ is full in $S$, then so is $H$ and for transversals $\T, \U$ we have
$$S = \bigcup_{t \in \T} \upset{Ht} \quad \text{and} \quad H = \bigcup_{u \in \U} \upset{Ku}.$$
Therefore
$$S = \{ s \in S : s \geq ht \quad \textit{for some} \; t \in \T, h \in H \},$$ and
$$H = \{ s \in S : s \geq ku \quad \textit{for some} \; u \in \U,   k \in K \}.$$
Now if $s \geq ht$ and $h \geq ku$ then $s \geq kut $. Then 
$s\in\,\upset{Kut}$ and $\upset{Kut}$ is a coset of $K$ in $S$, since $K$ is full in $S$ and therefore $ (ut)\,(ut)^{-1}\,\in\,K.$\\
Hence
$$ S = \bigcup_{\substack{u \in U  \\    t \in \T}} \, \upset{Kut} \,.$$

It remains to show that all the cosets $\upset{Kut}$ are distinct. 
Suppose that $\upset {Ku't'} = \upset{Kut}$.
Then by part (c) Proposition \ref{cosetsofL}, $u't't^{-1}u^{-1}\in K$ and so $u't't^{-1}u^{-1} \in H$.
Since $u,u' \in H$ we have
$(u')^{-1}u't't^{-1}u^{-1}u \in H$
and since
$t't^{-1} \geq  (u')^{-1}u't' t^{-1}u^{-1}u \in H$
and $H$ is closed, then $t't^{-1} \in H .$ 
This implies that 
$ \upset {Ht'}= \upset {Ht}$ and it follows that $t'=t$.

Now 
$u't't^{-1}u^{-1} \in K$. Since $t' = t$, then $t't^{-1} \in E(S)$ and so
$u'u^{-1} \geq u't't^{-1}u^{-1}$.
But $K$ is closed, so $u'u^{-1} \in K$ and $\upset{Ku'} = \upset {Ku}$.  Hence $u'=u$.
Consequently, all the cosets $\upset{Kut}$ are distinct.
\end{proof}

Recall from Example \ref{Eclosed} that the property that $E(S)$ is closed is expressed by saying that $S$ is
$E$--unitary and that  in this case, the set of cosets of $E(S)$ is in
one-to-one correspondence with the maximum group image $\what{S}$ of $S$.  

\begin{prop}
\label{fullfindex}
Let $S$ be an $E$--unitary inverse semigroup.  Then:
\begin{enumerate}
\item[(a)] $E(S)$ has finite index if and only if the maximal group image $\what{S}$ is finite, and $[ S: E ] = |\what{S}|$,
\item[(b)] if $E(S)$ has finite index in $S$ then, for any closed, full, inverse subsemigroup $L$ of $S$ we have
\[ [ S:L ] = |\what{S}| \, / \,|\what{L}| \,\]
\end{enumerate}
\end{prop}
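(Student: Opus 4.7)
Part (a) is essentially a restatement of Example \ref{Eclosed}: in an $E$-unitary inverse semigroup $S$ the cosets of $E(S)$ are precisely the $\sigma$-classes, and these are in bijection with the elements of $\what{S}$. So $[S:E(S)]$ is finite if and only if $\what{S}$ is, and the two cardinalities agree.

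For part (b), the plan is to identify $\what{L}$ with a subgroup of $\what{S}$ and then set up a bijection
$$\Phi:\upset{Ls}\longmapsto \what{L}\cdot\bar{s}$$
between the cosets of $L$ in $S$ and the right cosets of $\what{L}$ in $\what{S}$, where $\bar{s}$ denotes the image of $s$ in $\what{S}$. Once $\Phi$ is a bijection, Lagrange's theorem together with part (a) gives
$$[S:L]=[\what{S}:\what{L}]=|\what{S}|/|\what{L}|.$$

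The preliminary ingredient is that $L$ is itself $E$-unitary with $E(L)=E(S)$: fullness gives $E(L)=E(S)$, and if $e\in E(L)$ satisfies $e\leq x\in L$ then $e\leq x$ holds in $S$, forcing $x\in E(S)=E(L)$ by $E$-unitarity of $S$. I would then check that the composition $L\hookrightarrow S\to\what{S}$ induces an embedding $\what{L}\hookrightarrow\what{S}$; the key observation is that $a\,\sigma_L\,b$ iff $ae=be$ for some $e\in E(L)=E(S)$, which is exactly $a\,\sigma_S\,b$. A convenient consequence, which I would record before continuing, is that the preimage of $\what{L}$ in $S$ is exactly $L$: if $x\,\sigma_S\,l$ for some $l\in L$, pick $e\in E(S)$ with $ex=el$; then $ex\in L$ because $E(S)\subseteq L$ and $l\in L$, so $x\geq ex\in L$, and closedness of $L$ gives $x\in L$.

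Well-definedness and surjectivity of $\Phi$ are then routine: if $\upset{Ls}=\upset{Ls'}$ then $ss'^{-1}\in L$ by Proposition \ref{cosetsofL}(c), so $\bar{s}\bar{s'}^{-1}\in\what{L}$; and any $g\in\what{S}$ lifts to some $s\in S$, whose $ss^{-1}\in E(S)\subseteq L$ produces a legitimate coset $\upset{Ls}$ mapping to $\what{L}g$. The principal obstacle is injectivity, but it dissolves once the preimage identity above is in hand: if $\what{L}\bar{s_1}=\what{L}\bar{s_2}$ then $\overline{s_1s_2^{-1}}\in\what{L}$, which by the identity forces $s_1s_2^{-1}\in L$, and Proposition \ref{cosetsofL}(c) then gives $\upset{Ls_1}=\upset{Ls_2}$.
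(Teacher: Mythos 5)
Your proof is correct, but it takes a genuinely different route from the paper. The paper disposes of part (b) in two lines by invoking the index formula of Theorem \ref{indexformula}: since $E(S)$ is full and $E(S) \subseteq L \subseteq S$, it writes $[S:E]=[S:L][L:E]$ and then substitutes $[S:E]=|\what{S}|$ and $[L:E]=|\what{L}|$, the latter using exactly the observation you also make, namely that $L$ is itself $E$--unitary with $E(L)=E(S)$. You instead bypass the index formula entirely and construct an explicit bijection $\upset{Ls} \mapsto \what{L}\,\ol{s}$ between the cosets of $L$ in $S$ and the cosets of $\what{L}$ in $\what{S}$; the decisive step, correctly identified and correctly proved, is that $L$ is the full $\sigma$-preimage of $\what{L}$, which is where closedness and fullness of $L$ both enter. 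Each approach has something to recommend it: the paper's is shorter given the machinery already in place, while yours is independent of Theorem \ref{indexformula}, proves the sharper statement $[S:L]=[\what{S}:\what{L}]$ with no finiteness hypothesis (finiteness is needed only to rewrite the group index as a quotient of cardinalities via Lagrange), and makes transparent the correspondence between full closed inverse subsemigroups of an $E$--unitary inverse semigroup and subgroups of its maximal group image.
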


\begin{proof}
Part (a) follows from our previous discussion.  For part (b) we have $E(S) \subseteq L \subseteq S$ and so if the index $[S:E]$ is finite 
then so are $[S:L]$ and $[L:E]$ with $[S:E]=[S:L][L:E]$.  But now $[S:E]=|\what{S}|$ and $[L:E]=|\what{L}|$.  
\end{proof}

The index formula in Theorem \ref{indexformula} can still be valid when $K$ is not full in $S$ as we show in the following Example.  

\begin{example}
\label{clism_of_I3}
We work in the  symmetric inverse monoid  $\curlyI_3 = \curlyI \left(\{1,2,3\}\right)$, and take  $L=stab(1) =\{\sigma \in \curlyI (X):\, 1\sigma =1 \}$,
which is a closed inverse subsemigroup of $ \curlyI (X)$ with $7$ elements.  There are $3$ cosets of $L$ in $\curlyI_3$, namely
$$  C_1 = \{\sigma \in \curlyI_3 : 1\sigma =1 \} = L,$$ 
$$ C_2 = \{\sigma \in \curlyI_3 : 1\sigma =2 \},$$ 
$$ C_3 = \{\sigma \in \curlyI_3 : 1\sigma =3 \},$$ 
and so  $[\curlyI_3:L]=3\,.$ 

Now take 
\[ K = \,\left\{
\begin{pmatrix} 
 1 & 2  &  3 \\
  1   & 2  & 3\\
\end{pmatrix},
\begin{pmatrix} 
 1 & 2  &  3 \\
  1  & 3  & 2\\
\end{pmatrix}\right\}\,.\]
Then $K$ is a closed inverse subsemigroup of $L.$ The domain of each $\sigma$ in $K$ is $\{1,2,3\}.$ and so the only coset
representatives for $K$ in $L$ are the permutations 
\[ \id= \begin{pmatrix} 
 1 & 2  &  3 \\
  1   & 2  & 3\\
\end{pmatrix} \; \text{and}\:\: \;
\sigma = \begin{pmatrix} 
 1 & 2  &  3 \\
  1  & 3  & 2\\
\end{pmatrix}.\]
But these are elements of $K$ and so $\nobraupset{K}=\upset{K \sigma}=K$ and there is just one coset.  Hence $[ L:K ]=1$.

Now, we calculate the cosets of $K$ in  $ \curlyI_3.$  Each permutation in $\curlyI_3$ is a possible coset representative
for $K$ and these produce three distinct cosets by Proposition \ref{cosetsofL}(3).  Hence $[\curlyI_3 :K ]=3$
and in this example
\[ [\curlyI_3:K ] = [\curlyI_3:L ] [ L:K ] \,.\]
\end{example}

The index of a \cliss\ $L$ of an inverse semigroup $S$ depends on the availability of coset representatives to make cosets, and so on the idempotents of $S$ contained in $L$.  In particular, we can have $K \subset L$ but $[S:K| < [S:L]$ as the following Example illustrates.  

\begin{example}
\label{index_in_fimxy}
Consider the free inverse monoid $\fim(x,y)$ and the closed inverse submonoids $K = \nobraupset{ \< x^2 \>}$ and
$H = \nobraupset{\< x^2,y^2 \>}$.  As in \cite{MarMea, Munn}, we represent elements of $\fim(x,y)$ by Munn
trees $(P,w)$ in the Cayley graph $\Gamma(F(x,y),\{x,y\})$ where $F(x,y)$ is the free group on $x,y$.

Consider a coset $\nobraupset{K(P,w)}$.  For this to exist, the idempotent $(P,1)=(P,w)(P,w)^{-1}$ must be in $K$ and so as a 
subtree of $\Gamma$, $P$ can only involve vertices in $F(x)$ and edges between them.  Since $w \in P$ we must have
$w \in F(x)$.  It is then easy to see that there are only two cosets, $K$ and $\upset{Kx}$ and so $[\fim(x,y):K]=2$.
Similarly, $[H:K]=1$.

Now consider a coset $\nobraupset{H(P,w)}$.  Now $(P,1) \in H$ and so $P$ must be contained in the subtree of $\Gamma$ spanned by the
vertices of the subgroup $\< x^2,y^2 \> \subset F(x,y)$, and $w \in P$.  If $w \in \< x^2,y^2 \>$
then $(P,w) \in H$ and $\nobraupset{H(P,w)}=H$.  Otherwise, $w = ux$ or $w=uy$ with $u \in \< x^2,y^2 \>$ and it
follows that there are three cosets of $H$ in $\fim(x,y)$, namely $H, \upset{Hx}$ and $\upset{Hy}$.  Therefore
\[ [\fim(x,y):H] = 3 > [\fim(x,y):K] =2 \,.\]
These calculations also follow from results of Margolis and Meakin, see \cite[Lemma 3.2]{MarMea}.

We note that the index formula fails to hold.  This does not contradict Theorem \ref{indexformula}, since $K$ is not full in 
$\fim(x,y).$
\end{example}

\subsection{Coset actions}
Our next aim is to derive an analogue of Marshall Hall's Theorem (see \cite{MHallJr} and \cite{BaumBook}) that, in a free group of finite rank, there are only finitely many subgroups of a fixed finite index.  We first record some preliminary results on actions on cosets:
these results are due to Schein \cite{Sch2} and are presented in \cite[Section 5.8]{HoBook}.  We give them here  for the reader's convenience.

\begin{lemma}
\label{rt_upset}
For any subset $A$ of $S$ and for any $u \in S$ we have $\upset{Au} = \upset{\nobraupset{A}u}$.  
\end{lemma}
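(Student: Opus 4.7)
The plan is to prove the two inclusions separately, with the nontrivial direction relying only on right-compatibility of the natural partial order with multiplication.

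For the inclusion $\upset{Au} \subseteq \upset{\nobraupset{A}u}$, I would simply note that $A \subseteq \nobraupset{A}$ by the definition of closure, and hence $Au \subseteq \nobraupset{A}u$. Taking upward closure on both sides preserves the inclusion, which gives what I want. This step is immediate and should occupy a single line.

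For the reverse inclusion $\upset{\nobraupset{A}u} \subseteq \upset{Au}$, I would take an arbitrary $x \in \upset{\nobraupset{A}u}$ and unwind the definitions: there exists $b \in \nobraupset{A}$ with $x \geq bu$, and by definition of $\nobraupset{A}$ there exists $a \in A$ with $b \geq a$. The crucial fact to invoke is that the natural partial order on $S$ is compatible with right multiplication, i.e.\ $b \geq a$ implies $bu \geq au$; this follows directly from the characterisation $a \leq b \iff a = eb$ for some $e \in E(S)$, since then $au = ebu$. Combining $x \geq bu \geq au$ with $au \in Au$ gives $x \in \upset{Au}$, as required.

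There is no real obstacle here: the lemma is essentially a compatibility statement for the natural partial order, and the only fact used beyond set-theoretic manipulation of the closure is right-compatibility, which is immediate from the definition of $\leq$. I would keep the proof to four or five lines and avoid any appeal to the coset-theoretic machinery developed earlier.
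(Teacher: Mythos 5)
Your proof is correct and follows essentially the same route as the paper's: the easy inclusion from $A \subseteq \nobraupset{A}$, and the reverse inclusion by unwinding the definitions and using right-compatibility of the natural partial order (which the paper leaves implicit in the step $x \geq yu \geq au$). You simply make the compatibility step explicit, which is a reasonable choice.
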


\begin{proof}
Since $A \subseteq \upset{A}$ it is clear that $\upset{Au} \subseteq \upset{\upset{A}u}$.  On the other hand, if
$x \in \upset{\upset{A}u}$ then for some $a \in A$ we have $y \in S$ with $y \geq a$ and $x \geq yu$.  But then
$x \geq au$ and so $x \in \upset{Au}$.
\end{proof}

Let $L$ be a closed inverse subsemigroup of $S$, and let $s \in S$ with $ss^{-1} \in L$ with $C=\upset{Ls}$
Now suppose that $u \in S$ and that $\upset{Cuu^{-1}}=C$.  Then we define $C \lhd u = \upset{Cu}$.

\begin{lemma}
\label{eq_action_cond}
The condition $\upset{Cuu^{-1}}=C$ for $C \lhd u$ to be defined is equivalent to the condition that
$suu^{-1}s^{-1} \in L$.
\end{lemma}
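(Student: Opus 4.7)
The plan is to reduce the equivalence to a cleaner statement about $\upset{Ls}$, using Lemma \ref{rt_upset}, and then treat the two implications separately, with the key algebraic trick being the factorisation $lsuu^{-1} = (lsuu^{-1}s^{-1})\,s$.

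First I would rewrite $\upset{Cuu^{-1}}$ in a more tractable form. Since $C = \upset{Ls}$, Lemma \ref{rt_upset} (applied with $A = Ls$ and $u$ replaced by the idempotent $uu^{-1}$) gives
\[
\upset{C \cdot uu^{-1}} \;=\; \upset{\upset{Ls}\cdot uu^{-1}} \;=\; \upset{Ls\cdot uu^{-1}}.
\]
So the condition $\upset{Cuu^{-1}} = C$ becomes $\upset{Lsuu^{-1}} = \upset{Ls}$. I will also want the observation that $s \in C$: indeed $s = (ss^{-1})s \in Ls$ since $ss^{-1}\in L$.

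For the forward direction, assume $\upset{Cuu^{-1}} = C$. Since $s\in C$, we have $suu^{-1} \in Cuu^{-1} \subseteq \upset{Cuu^{-1}} = C = \upset{Ls}$, so $suu^{-1} \geq ls$ for some $l \in L$. Multiplying on the right by $s^{-1}$ yields
\[
suu^{-1}s^{-1} \;\geq\; lss^{-1},
\]
and $lss^{-1} \in L$ because $l,ss^{-1}\in L$. Since $L$ is closed, $suu^{-1}s^{-1} \in L$.

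For the converse, assume $suu^{-1}s^{-1}\in L$, and I will verify the two inclusions that $\upset{Lsuu^{-1}} = \upset{Ls}$. The inclusion $\upset{Ls}\subseteq\upset{Lsuu^{-1}}$ is immediate from $lsuu^{-1}\leq ls$ (multiplication by the idempotent $uu^{-1}$ makes things smaller in the natural partial order), so anything above $ls$ is automatically above $lsuu^{-1}$. For the reverse inclusion, take $x \geq lsuu^{-1}$ with $l\in L$, and write
\[
lsuu^{-1} \;=\; \bigl(lsuu^{-1}s^{-1}\bigr)\,s.
\]
The element $m := lsuu^{-1}s^{-1}$ lies in $L$, being the product of $l\in L$ and $suu^{-1}s^{-1}\in L$. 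Therefore $x\geq ms \in Ls$, so $x\in\upset{Ls}$, completing the proof.

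There is no real obstacle here; the only subtlety is remembering to invoke Lemma \ref{rt_upset} to strip the outer closure on the left factor, and to use the factorisation trick to re-land inside $Ls$ rather than merely $Lsuu^{-1}$.
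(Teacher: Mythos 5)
Your proof is correct and follows essentially the same route as the paper: both reduce $\upset{Cuu^{-1}}$ to $\upset{Lsuu^{-1}}$ via Lemma \ref{rt_upset}, extract $suu^{-1}s^{-1}\in L$ in the forward direction by comparing $suu^{-1}$ with some $ls$ and using closedness of $L$, and use the factorisation $lsuu^{-1}=(lsuu^{-1}s^{-1})s$ (the paper writes it as $ysuu^{-1}=yss^{-1}suu^{-1}=ysuu^{-1}s^{-1}s$) for the converse. The only cosmetic difference is that you locate $suu^{-1}$ in $C$ via $s\in C$, whereas the paper writes $suu^{-1}=ss^{-1}\cdot suu^{-1}\in Lsuu^{-1}$ directly.
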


\begin{proof}
Since for any $c \in C$ we have $c \geq cuu^{-1}$ it is clear that $C \subseteq \upset{Cuu^{-1}}$.
Now suppose that $suu^{-1}s^{-1} \in L$ and that $x \in \upset{Cuu^{-1}} = \upset{Lsuu^{-1}}$ (by
Lemma \ref{rt_upset}).  Hence there exists $y \in L$ with 
$$x \geq ysuu^{-1} = yss^{-1}suu^{-1} = ysuu^{-1}s^{-1}s \in Ls.$$
Therefore $x \in \upset{Ls} = C$ and so $\upset{Cuu^{-1}}=C$.  On the other hand, if $\upset{Cuu^{-1}}=C$,
then $\upset{Lsuu^{-1}} \subseteq \upset{Ls}$.  Since $ss^{-1} \in L$ we have
$ss^{-1}suu^{-1} = suu^{-1} \in \upset{Ls}$ and so there exists $y \in L$ with $suu^{-1} \geq ys$.  But then
$suu^{-1}s^{-1} \geq yss^{-1} \in L$ and since $L$ is closed we deduce that $suu^{-1}s^{-1} \in L$.
\end{proof}

It follows from Lemma \ref{eq_action_cond} that the condition $suu^{-1}s^{-1} \in L$ does not depend on the
choice of coset representative $s$.  This is easy to see directly.  If $\upset{Ls} = \upset{Lt}$ then, by 
part (c) of Proposition \ref{cosetsofL} we have $st^{-1} \in L$.  Then
$$tuu^{-1}t^{-1} \geq ts^{-1}suu^{-1}s^{-1}st^{-1} = (st^{-1})^{-1}(suu^{-1}s^{-1})(st^{-1}) \in L$$
and since $L$ is closed, $tuu^{-1}t^{-1} \in L$.

\begin{prop}
\label{transaction}
If $u \in S$ and $\upset{Cuu^{-1}}=C$ then $\upset{Cu} = \upset{Lsu}$ and the rule $C \lhd u = \upset{Cu}$
defines a transitive action of $S$ by partial bijections on the cosets of $L$.
\end{prop}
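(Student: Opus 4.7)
The plan is to work through four steps: establish the identity $\upset{Cu}=\upset{Lsu}$, check independence from the coset representative and that the image is again a coset, verify injectivity of $C\mapsto C\lhd u$ for each fixed $u$, and finally deal with compatibility under multiplication and transitivity.

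For the opening identity, I will invoke Lemma \ref{rt_upset} with $A=Ls$. That the image $\upset{Lsu}$ is a coset is immediate from Lemma \ref{eq_action_cond}, since the hypothesis $\upset{Cuu^{-1}}=C$ is equivalent to $suu^{-1}s^{-1}=(su)(su)^{-1}\in L$. To see independence from the representative, if $s'\in C$ also, Proposition \ref{cosetsofL}(c) gives $s(s')^{-1}\in L$, so $s\geq \ell s'$ for some $\ell\in L$, and hence $su\geq \ell s'u$; two cosets sharing a common element must coincide by part (b) of that proposition.

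The main algebraic step is injectivity. If $C_i=\upset{Ls_i}$ for $i=1,2$ and $C_1\lhd u=C_2\lhd u$, then Proposition \ref{cosetsofL}(c) gives $s_1uu^{-1}s_2^{-1}\in L$. My plan here is to exploit the identity
\[ (s_1s_2^{-1})(s_2uu^{-1}s_2^{-1}) \;=\; s_1uu^{-1}s_2^{-1},\]
which holds because the idempotents $s_2^{-1}s_2$ and $uu^{-1}$ commute and $s_2^{-1}s_2s_2^{-1}=s_2^{-1}$. Since $s_2uu^{-1}s_2^{-1}$ is an idempotent, this exhibits $s_1uu^{-1}s_2^{-1}\leq s_1s_2^{-1}$, and closure of $L$ yields $s_1s_2^{-1}\in L$, so $C_1=C_2$ by Proposition \ref{cosetsofL}(c) again.

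Finally, the compatibility $(C\lhd u)\lhd v=C\lhd(uv)$ should be transparent from the representative form: both sides equal $\upset{Lsuv}$, and the range-idempotent inequality $s(uv)(uv)^{-1}s^{-1}\leq suu^{-1}s^{-1}$ ensures that definedness of $C\lhd(uv)$ forces definedness of $C\lhd u$. For transitivity, given cosets $\upset{Ls}$ and $\upset{Lt}$, I take $u=s^{-1}t$; then $su=ss^{-1}t\leq t$ puts $t$ in the image, and the condition $suu^{-1}s^{-1}=(ss^{-1})(tt^{-1})(ss^{-1})$ lies in $L$ because $ss^{-1},tt^{-1}\in L$ by Proposition \ref{cosetsofL}(b). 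I anticipate that the only real friction will be noticing the identity in the injectivity step; the remaining parts are essentially assembly of the earlier lemmas.
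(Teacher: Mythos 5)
Your proposal is correct in substance and diverges from the paper's proof at two points. For injectivity of $C \mapsto C \lhd u$, the paper simply observes that $(Cu) \lhd u^{-1}$ is defined and equals $\upset{Cuu^{-1}} = C$, so that $u^{-1}$ supplies the inverse partial map; your direct computation via the identity $(s_1s_2^{-1})(s_2uu^{-1}s_2^{-1}) = s_1uu^{-1}s_2^{-1}$ is valid (the commuting-idempotents step checks out) but does more work than the paper needs. Your check of independence from the coset representative is also sound, though the paper places that verification in the discussion following Lemma \ref{eq_action_cond} rather than inside this proof. For compatibility, the paper works with the sets $\upset{Cstt^{-1}s^{-1}}$, $\upset{Css^{-1}}$, $\upset{Cstt^{-1}}$ directly and uses Lemma \ref{rt_upset} to prove both directions of the equivalence of definedness conditions; your representative-based version is cleaner once completed, but as written it is the one under-argued step.

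Specifically, to get an action by partial bijections you must show that $C \lhd (uv)$ is defined \emph{if and only if} both $C \lhd u$ and $(C \lhd u) \lhd v$ are defined, and you only state the implication that definedness of $C \lhd (uv)$ forces definedness of $C \lhd u$. The remaining implications do follow from your setup, but the key observation should be written down: since $C \lhd u = \upset{L(su)}$ has representative $su$, the condition for $(C \lhd u) \lhd v$ to be defined is $(su)vv^{-1}(su)^{-1} = s(uv)(uv)^{-1}s^{-1} \in L$, which is \emph{identical} to the condition for $C \lhd (uv)$ to be defined. Combined with your closure argument this gives the full equivalence in one line. The transitivity argument is the same as the paper's.
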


\begin{proof}
Since $C=\upset{Ls}$, Lemma \ref{rt_upset} implies that $\upset{Cu} = \upset{Lsu}$.  To check that $\upset{Lsu}$ is
a coset of $L$, we need to verify that $(su)(su)^{-1} \in L$.  But $(su)(su)^{-1} = suu^{-1}s^{-1}$ and so this follows 
from Lemma \ref{eq_action_cond}. Moreover, $(Cu) \lhd u^{-1}$ is defined and equal to
$\upset{Cuu^{-1}}=C$, so that the action of $u$ is a partial bijection.

It remains to show that for any $s,t \in S$, the action of $st$ is the same as the action of $s$ followed by the action of $t$ whenever
these are defined.  Now the outcome of the actions are certainly the same:  for a coset $C$, we have $C \lhd (st) = \upset{Cst}$
and 
$$(C \lhd s) \lhd t = \upset{Cs} \lhd t = \upset{\upset{Cs}t} =  \upset{Cst}$$
by Lemma \ref{rt_upset}.

The conditions for $C \lhd (st)$, $C \lhd s$, $(C \lhd s) \lhd t$ to be defined are, respectively:
\begin{align}
& \upset{Cstt^{-1}s^{-1}} = C \,, \label{defCst}\\
& \upset{Css^{-1}}=C \,, \label{defCs}\\
& \upset{\upset{Cs}tt^{-1}} = \upset{Cstt^{-1}} = \upset{Cs}. \label{def(Cs)t}
\end{align}
Suppose that \eqref{defCst} holds.  Then
$$\upset{Css^{-1}} \subseteq \upset{Cstt^{-1}s^{-1}} = C.$$
But it is clear that $C \subseteq \upset{Css^{-1}}$, and so $\upset{Css^{-1}}=C$ and \eqref{defCs} holds.  Now 
it is again clear that $\upset{Cs} \subseteq \upset{Cstt^{-1}}$, and
$$\upset{Cstt^{-1}} \subseteq \upset{Cstt^{-1}s^{-1}s} = \upset{\upset{Cstt^{-1}s^{-1}}s} = \upset{Cs}.$$
Therefore \eqref{def(Cs)t} holds.

Now if both \eqref{defCs} and \eqref{def(Cs)t} hold we have
\begin{align*}
\upset{Cstt^{-1}s^{-1}} &= \upset{\upset{Cstt^{-1}}s^{-1}}  & \text{by Lemma \ref{rt_upset}} \\
&= \upset{\upset{Cs}s^{-1}}  & \text{by \eqref{def(Cs)t}} \\
&= \upset{Css^{-1}}  & \text{by Lemma \ref{rt_upset}} \\
&= C  & \text{by \eqref{defCs}}.
\end{align*}
and therefore \eqref{defCst} holds.

To show that the action is transitive, consider two cosets $\upset{La}$ and $\upset{Lb}$.  Then
$\upset{La} \lhd a^{-1}b$ is defined since $a(a^{-1}b)(a^{-1}b)^{-1}a^{-1}=aa^{-1}bb^{-1} \in L$,
and $\upset{La} \lhd a^{-1}b = \upset{Laa^{-1}b} = \upset{Lb}$, since again $aa^{-1}bb^{-1} \in L$.
\end{proof}

\subsection{Marshall Hall's Theorem for inverse semigroups}
\begin{theorem}
\label{Hall's theorem}
In a finitely generated inverse semigroup $S$ there are at most finitely many distinct closed inverse subsemigroups of a fixed finite index 
$d$. 
\end{theorem}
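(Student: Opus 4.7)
The plan is to adapt the classical group-theoretic proof. Given a closed inverse subsemigroup $L$ of finite index $d$ in $S$, I shall use the coset action of Proposition \ref{transaction} to associate to $L$ a homomorphism $\varphi_L : S \to \curlyI_d$ into the symmetric inverse monoid on a $d$-element set, and then show that $\varphi_L$ determines $L$. Since $S$ is finitely generated and $\curlyI_d$ is finite, this will give the required finite bound.

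In detail, fix a finite generating set $X$ for $S$, and suppose $L$ has index $d$. First observe that $L$ is itself one of its own cosets: for any idempotent $e \in E(L)$ one checks that $L = \upset{Le}$, the non-trivial inclusion being that every $l \in L$ satisfies $l \geq le \in Le$ since $e$ is idempotent. Choose a labelling of the $d$ cosets by $\{1,\ldots,d\}$ that assigns the label $1$ to the coset $L$. By Proposition \ref{transaction}, the rule $C \lhd u = \upset{Cu}$ yields a transitive action of $S$ by partial bijections on the cosets, and via the chosen labelling this becomes a homomorphism $\varphi_L : S \to \curlyI_d$.

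The key step is to recover $L$ from $\varphi_L$ by the formula
\[ L = \{\, s \in S : 1 \lhd \varphi_L(s) = 1 \,\}, \]
where the right-hand side requires $1$ to lie in the domain of the partial bijection $\varphi_L(s)$ and to be fixed by it. If $s \in L$ then $ss^{-1} \in L$, so $L \lhd s$ is defined, and a direct verification gives $\upset{Ls} = L$, so that $1 \lhd \varphi_L(s) = 1$. Conversely, if $1 \lhd \varphi_L(s) = 1$, then $ss^{-1} \in L$ and $\upset{Ls} = L$; since $s = (ss^{-1})s \in Ls \subseteq \upset{Ls} = L$, we conclude $s \in L$.

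Finally, since $S$ is generated by $X$, the homomorphism $\varphi_L$ is determined by its restriction $\varphi_L|_X : X \to \curlyI_d$, of which there are at most $|\curlyI_d|^{|X|}$ possibilities. As $\varphi_L$ determines $L$, the number of closed inverse subsemigroups of $S$ of index $d$ is at most $|\curlyI_d|^{|X|}$, which is finite. The main point requiring care is the recovery formula: one must check both that definedness of $L \lhd s$ corresponds to $1$ lying in the domain of $\varphi_L(s)$, and that $\upset{Ls} = L$ corresponds to $1$ being fixed, so that the two halves of the stabiliser condition together pin down $s \in L$.
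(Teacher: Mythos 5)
Your proposal is correct and follows essentially the same route as the paper's own proof: both convert the transitive coset action of Proposition \ref{transaction} into a homomorphism into the symmetric inverse monoid on $d$ points, recover $L$ as the stabiliser of the point labelling the coset $L$ itself (checking both the definedness and the fixed-point halves of the condition, exactly as the paper does via Proposition \ref{cosets_idpt} and Lemma \ref{eq_action_cond}), and then bound the count by the finitely many homomorphisms determined by a finite generating set. No substantive differences to report.
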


\begin{proof}
Suppose that the inverse semigroup $S$ is finitely generated and that the closed inverse subsemigroup $L$ of $S$ has exactly $d$ cosets. We aim to construct an inverse semigroup homomorphism 
   $$   \phi_{L}: S\longrightarrow \curlyI(D), $$ 
where $ \curlyI(D) $ is the symmetric inverse monoid on $D=\{1,...,d\}.$

Write the  distinct cosets of $L$ as $\upset{L c_{1}},\upset{L c_{2}},...\,,\upset{L c_{d}},$ with $c_{1},c_{2},...,c_{d}\in S,$ and with $\upset{L c_{1}}=L.$ Now take $u \in S.$ If $c_{j}\,u\,u^{-1}\,{c_{j}}^{-1}\in L,$ where $ j\in \{1,...,d\},$ then we can define an action of the element $u \in S$ on the coset $\upset{Lc_j}$ of $L$ as follows:
$$\upset{Lc_{j}} \lhd u = \upset{Lc_{j}u}.$$ 
By Proposition \ref{transaction}, $\upset{Lc_{j}u}$ is indeed a coset of $L,$ and so $\upset{Lc_{j}u}  =\upset{Lc_{k}},$ where $ k\in \{1,...,d\}.$ Then we can write $\upset{Lc_{j}} \lhd u = \upset{Lc_{k}},$ and this action of $u$ induces an action $j \lhd u = k$ of $u$ on $D$, and so we get a homomorphism
$$   \phi_{L}: S\longrightarrow \curlyI(D) . $$
We now claim that different choices of $L$ give us different homomorphisms $\phi_{L}$, or equivalently, that if
$\phi_{L}=\phi_{K}$ then $L=K$.  

By Proposition \ref{cosets_idpt}, if $x \in L$ then $L = \upset{Lxx^{-1}}$.  By Lemma \ref{eq_action_cond} $L \lhd x$ is defined and is 
 equal to $\upset{Lx}=L$.  Now suppose that $L \lhd y$ is defined and that $\upset{Ly}$ is equal to $L$.  By Lemma \ref{eq_action_cond}
 we have $\upset{Lyy^{-1}}=L$.  Hence $yy^{-1} \in L$, and $y = yy^{-1}y \in \upset{Ly}=L$.   It follows that 
$\stab(L)=L$ and in the induced action of $S$ on $D$ we have $\stab(1)=L,$ so that $L$ is determined by $\phi_{L}\,.$ 
 Therefore, the number of closed inverse subsemigroups of index $d$ is at most the number of homomorphisms $\phi : S \longrightarrow \curlyI (D)$, and
since $S$ is finitely generated, this number is finite.
 \end{proof}

\section{Finite generation and finite index}
\label{fg_and_fi}
In this section, we shall look at the properties of closed inverse submonoids of free inverse monoids considered in Theorem
\ref{MMthm}, and the relationships between these properties when we replace a free inverse monoid by an arbitrary inverse monoid.
Throughout this section, $M$ will be an inverse monoid generated by a finite subset $X$.  This means that the smallest inverse submonoid $\< X \>$
of $M$ that contains $X$ is $M$ itself: equivalently, each element of $M$ can be written as a product of elements of $X$ and inverses of elements of $X$, so if we set $A= X \cup X^{-1}$ then each element of $M$ can be written as a product of elements in $A$.
A closed inverse submonoid $L$ of $M$ is said to be \emph{finitely generated as a closed inverse submonoid} if there exists a finite subset $Y \subseteq L$
such that, for each $\ell \in L$ there exists a product $w$ of elements of $Y$ and their inverses such that $\ell \geq w$.  Equivalently, the
smallest closed inverse submonoid of $M$ that contains $Y$ is $L$.
We remark that in \cite{MarMea} the notation $\< X \>$ is used for the smallest \emph{closed} inverse submonoid of $M$ that
contains $X$.  We shall use  $\nobraupset{\< X \>}$ for this.

We will need to use some ideas from the theory of finite automata and for background information on this topic we refer to  \cite{LwautBook, Pin, SipBook}.

A \emph{deterministic finite state automaton} $\A$ (or just an \emph{automaton} in this section) consists of
\begin{itemize}
\item a finite set $S$ of \emph{states},
\item a finite \emph{input alphabet} $A$,
\item an \emph{initial state} $s_0 \in S$,
\item a partially defined \emph{transition function} $\tau : S \times A \ra S$,
\item a subset $T \subseteq S$ of \emph{final} states.
\end{itemize}

We shall write $s \lhd a$ for $\tau(s,a)$ if $\tau(s,a)$ is defined.  Given a word $w = a_1 a_2 \dotsm a_m \in A^*$ we write
$s \lhd w$ for the state $( \dots (s \lhd a_1) \lhd a_2) \lhd \dotsb ) \lhd a_m,$ that is, for the state obtained from $s$ by computing the
succesive outcomes, if all are defined,  of the transition function determined by the letters of $w$, with the empty word $\varep$ acting by $s \lhd \varep = s$ for all $s \in S$.  We normally think of
an automaton in terms of its \emph{transition diagram}, in which the states are the vertices of a directed graph
and the edge set is $S \times A$, with an edge $(s,a)$ having source $s$ and target $s \lhd a$.


Let $X$ be a finite set, $X^{-1}$ a disjoint set of formal inverses of elements of $X$, and $A=X \cup X^{-1}$  An automaton 
$\A$ with input alphabet $A$ is called a \emph{dual automaton} if, whenever $s \lhd a = t$ then $t \lhd a^{-1} =s.$  A dual
automaton is called an \emph{inverse automaton} if, for each $a \in A$ the partial function $\tau(-,a): S \ra S$ is injective.  
(See \cite[Section 2.1]{LwBook}.)

A word $w \in A^*$ is \emph{accepted} or \emph{recognized} by $\A$ if $s_0 \lhd w$ is defined and $s_0 \lhd w \in T$.  The set of all words recognized by 
$\A$ is the \emph{language} of $\A$:
\[ \L(\A) = \{ w \in A^* : s_0 \lhd w \in T \}\,. \]
A language $\L$ is \emph{recognizable} if it is the language recognized by some automaton. The connection between automata and closed inverse subsemigroups of finite index is made, as in \cite{MarMea}, by the coset automaton.

Let $M$ be a finitely generated inverse monoid, generated by $X \subseteq M$, and let $L$ be a closed inverse
submonoid of $M$ of finite index.  Since $M$ is generated by $X$, there is a natural monoid homomorphism $\theta: A^* \ra M$.   The \emph{coset automaton} $\C=\C(M:L)$ is defined as follows:  
\begin{itemize}
\item the set of states is the set of cosets of $L$ in $M$, 
\item the input alphabet is $A = X \cup X^{-1}$,
\item the initial state is the coset $L$,
\item the transition function is defined by $\tau(\upset{Lt},a) = \upset{Lt(a \theta)}$,
\item the only final state is $L$.
\end{itemize}
By Lemma \ref{eq_action_cond} and Proposition \ref{transaction}, $\upset{Lt} \lhd a$ is defined if and only if
$t(a \theta)(a \theta)^{-1}t^{-1} \in L$.
The following Lemma 
occurs as \cite[Lemma 3.2]{MarMea} for the case that $M$ is the free inverse monoid $\fim(X)$.

\begin{lemma}
\label{coset_aut}
The coset automaton of $L$ in $M$ is an inverse automaton. 
The language $\L(\C(M:L))$ that it recognizes is
\[ L \theta^{-1} = \{ w \in A^* : w \theta \in L \}\]
and $\C(M:L)$ is the minimal automaton recognizing $L \theta^{-1}$.
\end{lemma}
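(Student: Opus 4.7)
The plan is to verify the three claims of the lemma separately: (i) $\C(M:L)$ is an inverse automaton, (ii) $\L(\C(M:L)) = L\theta^{-1}$, and (iii) $\C(M:L)$ is minimal among automata recognizing this language.

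For (i), both the dual property and the injectivity of each letter action should follow directly from Proposition \ref{transaction}, which establishes that elements of $M$ act on cosets of $L$ by partial bijections. If $\upset{Lt} \lhd a = \upset{Lt(a\theta)}$ is defined, then the action of $(a\theta)^{-1} = (a^{-1})\theta$ is the inverse partial bijection, so $\upset{Lt(a\theta)} \lhd a^{-1}$ is defined and returns $\upset{Lt}$. Injectivity of $\tau(-,a)$ is similarly immediate, since the map is a restriction of a partial bijection.

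For (ii), I would first show by induction on word length that whenever $L \lhd w$ is defined, it equals $\upset{L(w\theta)}$; the inductive step uses the composition property of Proposition \ref{transaction} together with the fact that $\theta:A^* \to M$ is a monoid homomorphism. Given this, $w \in \L(\C(M:L))$ iff $\upset{L(w\theta)} = L$, and I would verify that this is equivalent to $w\theta \in L$. If $w\theta \in L$ then $(w\theta)(w\theta)^{-1} \in L$, so the coset exists; moreover $L(w\theta) \subseteq L$ yields $\upset{L(w\theta)} \subseteq L$, and $1 \geq (w\theta)(w\theta)^{-1}$ puts $1$ in the closure, giving equality. Conversely, if $\upset{L(w\theta)} = L$ then $(w\theta)(w\theta)^{-1} \in L$ a priori, and hence $w\theta = (w\theta)(w\theta)^{-1}(w\theta) \in L(w\theta) \subseteq L$.

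For (iii), I would use the standard fact that an automaton is minimal exactly when every state is reachable from the initial state and any two distinct states are distinguishable. Reachability is straightforward: for a coset $\upset{Lt}$ pick $w \in A^*$ with $w\theta = t$ (possible because $X$ generates $M$); since $tt^{-1} \in L$, the action $L \lhd w$ is defined and by (ii) equals $\upset{Lt}$. For distinguishability, given distinct cosets $\upset{Lt_1}$ and $\upset{Lt_2}$, I would choose $w$ with $w\theta = t_1^{-1}$. Then $\upset{Lt_1} \lhd w = \upset{Lt_1 t_1^{-1}} = L$ because $t_1 t_1^{-1} \in L$, whereas if $\upset{Lt_2} \lhd w$ were defined and equal to $L$, then $\upset{Lt_2 t_1^{-1}} = L$ would force $t_2 t_1^{-1} \in L$, contradicting $\upset{Lt_1} \neq \upset{Lt_2}$ via Proposition \ref{cosetsofL}(c). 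I do not expect a genuinely hard step here; the main care needed is to track when each partial action is defined and to apply the composition clause of Proposition \ref{transaction} correctly in the induction underlying (ii).
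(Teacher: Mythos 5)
Your overall decomposition matches the paper's, and parts (i) and (iii) are sound. For (i) the paper likewise just invokes Proposition \ref{transaction}. For (iii) you take a genuinely different route: the paper appeals to a lemma of Reutenauer, that a connected inverse automaton with one initial and one final state is minimal, whereas you argue directly that every state is reachable and that any two distinct states $\upset{Lt_1} \neq \upset{Lt_2}$ are separated by a word mapping onto $t_1^{-1}$. Your separation step is correct (it is exactly Proposition \ref{cosetsofL}(c)), and your argument is more self-contained at the cost of being longer; the only care needed is that for a partial automaton ``minimal'' should be read as minimal \emph{trim} automaton, so you should say explicitly that every state accepts some word (which your choice $w\theta = t^{-1}$ already shows), i.e.\ that there are no dead states to delete.

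The one genuine gap is in (ii). Your inductive claim --- whenever $L \lhd w$ is defined it equals $\upset{L(w\theta)}$ --- yields only the forward implication of ``$w \in \L(\C(M:L))$ if and only if $\upset{L(w\theta)} = L$''. For the converse you must show that if $w\theta \in L$ then the letter-by-letter run $L \lhd a_{i_1} \lhd \dotsb \lhd a_{i_m}$ is actually \emph{defined}; this is where the paper's proof does its real work, by an induction on the prefixes $p_k$ of $w$ showing $(p_k\theta)(p_k\theta)^{-1} \geq (w\theta)(w\theta)^{-1} \in L$ and then invoking Lemma \ref{eq_action_cond} at each step. You flag the issue (``track when each partial action is defined'') but do not close it. It can be closed with the tools you cite: Proposition \ref{transaction} shows that $C \lhd (st)$ is defined if and only if both $C \lhd s$ and $(C \lhd s) \lhd t$ are, so an induction on $|w|$ gives that the run from $L$ on $w$ is defined if and only if $(w\theta)(w\theta)^{-1} \in L$, which certainly holds when $w\theta \in L$. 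Make that the statement you prove by induction, rather than the one-directional version. Finally, a small slip: to get $L \subseteq \upset{L(w\theta)}$ when $s = w\theta \in L$, the element $ss^{-1}$ is not visibly of the form $\ell s$; use instead $\ell \geq \ell s^{-1}s = (\ell s^{-1})s \in Ls$ for each $\ell \in L$, or note $1 \geq s^{-1}s \in Ls$ and apply Proposition \ref{cosetsofL}(b).
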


\begin{proof}
It follows from Proposition \ref{transaction} that $\C(M:L)$ is inverse.
Suppose that $w$ is recognized by $\C(M:L)$.  Then $(w \theta)(w\theta)^{-1} = (ww^{-1})\theta \in L$
and $L\upset{w \theta}=L.$  From Proposition \ref{cosetsofL}, we deduce that $w \theta \in L$.
Conversely, suppose that $w = a_{i_1} \dotsc a_{i_m} \in A^*$ and that $s = w \theta \in L$.  For $1 \leq k \leq m$, write
$p_k = a_{i_1} \dotsc a_{i_k}$, $q_k = a_{i_{k+1}} \dotsc a_{i_m}$, so that $w = p_kq_k,$ and take $s_k = p_k\theta,$ so that $s_1 = a_{i_1}\theta$.  Then
\[ s_1 s_1^{-1}s = s_1s_1^{-1}s_1(q_2 \theta) = s_1(q_2 \theta) = w \theta =s \]
and so $s_1 s_1^{-1} \geq ss^{-1} \in L$.  Therefore $s_1 s_1^{-1} \in L$ and $L \lhd a_{i_1} = \upset{Ls_1}$ is defined.
Now suppose that for some $k$ we have that $L \lhd w_k$ is defined and is equal to $\upset{Ls_k}$.
Then 
\[ s_{k+1}\,s_{k+1}^{-1}\,s = s_{k+1}\,s_{k+1}^{-1}\,s_{k+1}(q_{k+1} \theta)
= s_{k+1}\,(q_{k+1} \theta) = w \theta = s\]
and so $s_{k+1}\,s_{k+1}^{-1} \geq ss^{-1} \in L$ and therefore $s_{k+1}s_{k+1}^{-1} \in L.$
But 
\[ s_{k+1}\,s_{k+1}^{-1} = s_k(a_{i_{k+1}}\theta)\,(a_{i_{k+1}}\theta)^{-1}s_k^{-1} \in L \,,\]
and so by Lemma \ref{eq_action_cond}, $\upset{Ls_k} \lhd a_{i_{k+1}}$ is defined and is equal to
$\upset{Ls_k(a_{i_{k+1}}\theta)} = \upset{Ls_{k+1}}$.  It follows by induction that
$L \lhd w$ is defined in $\C(M:L)$ and is equal to $\upset{Ls} = L,$ and so $w \in L(\C(M:X))$.
Now by a result of Reutenauer \cite[Lemme 1]{Reut}, a connected inverse automaton with one initial and one final state
is minimal.
\end{proof}

The set of \emph{rational} subsets of $M$ is the smallest collection that contains all the finite subsets of $M$
and is closed under finite union, product, and generation of a submonoid.  Equivalently, $R \subseteq M$ is a rational subset of $M$ if and only if there exists a recognizable subset $Z \subseteq A^*$ with $Z \theta = R$ (see \cite[section IV.1]{Pin}).

We also recall the notion of \emph{star-height} of a rational set (see \cite[Chapter III]{Ber}).
Let $M$ be a monoid. Define a sequence of subsets $\Rat_h(M),$ with  \emph{star-height} $h\geq0,$ recursively as follows:
$$\Rat_{0}(M)=\{X \subseteq M| \,X \text{\,is \,finite\,}\},$$
and $\Rat_{h+1}(M)$ consists of the finite unions of sets of the form $B_{1}B_2 \dotsm B_{m}$ where each $B_{i}$ is either a singleton or $B_{i}=C_{i}^*,$ for some $C_{i}\in \Rat_{h}(M).$  It is well known that $\Rat(M)=\bigcup_{h\geq0} \Rat_{h}(M)$.

A subset $S$ of $M$ is \emph{recognizable}
if there exists a finite monoid $N,$ a monoid homomorphism $\phi: M \ra N$, and a subset $P \subseteq N$ such that
$S = P \phi^{-1}.$   For free monoids $A^*,$ Kleene's Theorem (see for example \cite[Theorem 5.2.1]{LwautBook})
tells us that the rational and recognizable subsets coincide.  For finitely generated monoids, we have the following theorem
due to McKnight.

\begin{theorem}
\label{mcknight}
In a finitely generated monoid $M$, every recognizable subset is rational.
\end{theorem}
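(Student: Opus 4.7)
The plan is to reduce the statement to Kleene's theorem in the free monoid by pulling back the recognizing data and then pushing the resulting rational set forward to $M$. Fix a finite set $A$ generating $M$, and let $\theta : A^* \ra M$ be the induced surjective monoid homomorphism. Given a recognizable subset $S \subseteq M$, fix a finite monoid $N$, a homomorphism $\phi : M \ra N$, and a subset $P \subseteq N$ with $S = P\phi^{-1}$.

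The first step is to form the composite $\psi = \theta\phi : A^* \ra N$ and observe that $P\psi^{-1}$ is a recognizable subset of the free monoid $A^*$, essentially by definition, since $\psi$ is a monoid homomorphism to a finite monoid. Then Kleene's theorem, cited in the paragraph preceding the statement, provides that $P\psi^{-1}$ is a rational subset of $A^*$.

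The second step is to check the set-theoretic identity $(P\psi^{-1})\theta = S$. The inclusion $\subseteq$ is immediate: if $w\psi \in P$ then $(w\theta)\phi \in P$, so $w\theta \in S$. For the reverse inclusion, I would use surjectivity of $\theta$: given $s \in S$, pick $w \in A^*$ with $w\theta = s$; then $w\psi = s\phi \in P$, so $w \in P\psi^{-1}$ and $s = w\theta \in (P\psi^{-1})\theta$.

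The third step is to push the rational subset $P\psi^{-1}$ of $A^*$ forward through $\theta$ and conclude that $S$ is rational in $M$. This relies on the basic closure property that the image of a rational subset under a monoid homomorphism is again rational, which follows by induction on star-height since the classes $\Rat_h$ are each defined using finite union, product, and submonoid generation, and each of these operations is preserved by any monoid homomorphism. The main obstacle, such as it is, is really just bookkeeping: making sure the forward image through $\theta$ recovers $S$ exactly (where surjectivity of $\theta$ is essential) and invoking the homomorphism-image stability of the rational sets cleanly, rather than any deep new argument.
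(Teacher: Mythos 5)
Your proof is correct, but note that the paper itself gives no proof of this statement: it is cited as a known theorem of McKnight, so there is no argument in the paper to compare against. What you have written is the standard proof. All three steps check out: $P\psi^{-1}$ is recognizable in $A^*$ directly from the definition, Kleene's theorem converts this to rationality in the free monoid, the identity $(P\psi^{-1})\theta = S$ uses surjectivity of $\theta$ exactly where you say it does, and the homomorphic image of a rational set is rational because union, product and submonoid generation all commute with a monoid homomorphism. In fact the paper's own definition of rational subsets already records the equivalent characterization ``$R$ is rational in $M$ iff $R = Z\theta$ for some recognizable $Z \subseteq A^*$,'' so relative to that formulation your first two steps alone suffice and the third (pushing rationality forward through $\theta$) is only needed if one insists on the inductive definition via $\Rat_h$. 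Either way the argument is complete.
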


If $M$ is generated (as an inverse monoid) by $X$, then as above we have a monoid homomorphism $\theta: A^* \ra M$.
We say that a subset $S$ of $M$ is \emph{recognized} by an automaton $\A$ if its full inverse image $S \theta^{-1}$ in $A^*$
is recognized by $\A$.  We shall use the Myhill-Nerode Theorem \cite{Myhill, Nerode} to characterize recognizable languages.
Let $K \subseteq A^*$ be a language.
Two words $u,v \in A^*$ are \emph{indistinguishable by} $K$ if, for all $z \in A^*$, $uz \in K$ if and only if $vz \in K$.
We write $u \simeq_K v$ in this case: it is easy to check that $\simeq_K$ is an equivalence relation (indeed, a right congruence)
on $A^*$.  The we have:

\begin{theorem}[The Myhill-Nerode Theorem]
\label{nerode Thm}
A language $\L$ is recognizable  if and only if the equivalence relation $ \simeq_{\L}$ has finitely many classes.
\end{theorem}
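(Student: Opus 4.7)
My plan is to prove the two directions of the Myhill--Nerode Theorem separately, constructing an automaton from the congruence classes in one direction and extracting a coarsening of $\simeq_{\L}$ from the states of a recognising automaton in the other.

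For the harder direction, suppose $\simeq_{\L}$ has only finitely many equivalence classes. I first observe that $\simeq_{\L}$ is a \emph{right congruence} on $A^*$: if $u \simeq_{\L} v$ then for any letter $a \in A$ and any $z \in A^*$, setting $z' = az$ gives $uaz = uz' \in \L \Leftrightarrow vz' = vaz \in \L$, so $ua \simeq_{\L} va$. I then define an automaton $\A$ whose state set is the (finite) set $A^*/\!\simeq_{\L}$, with initial state $[\varepsilon]$, transition function $[u] \lhd a = [ua]$ (well defined by the right congruence property), and set of final states $T = \{[u] : u \in \L\}$ (well defined because $u \simeq_{\L} v$ implies, taking $z=\varepsilon$, that $u \in \L \Leftrightarrow v \in \L$). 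A straightforward induction on $|w|$ shows $[\varepsilon] \lhd w = [w]$, so $w \in \L(\A) \Longleftrightarrow [w] \in T \Longleftrightarrow w \in \L$. Hence $\L$ is recognisable.

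For the converse, suppose $\L = \L(\A)$ for some automaton $\A$ with finite state set $S$, initial state $s_0$, and set of final states $T$. Define an equivalence relation $\sim$ on $A^*$ by $u \sim v$ if and only if either $s_0 \lhd u$ and $s_0 \lhd v$ are both undefined, or both are defined and equal. Then $\sim$ has at most $|S| + 1$ classes. I claim $\sim$ refines $\simeq_{\L}$: if $u \sim v$ then for any $z \in A^*$, $s_0 \lhd uz = (s_0 \lhd u) \lhd z$ is defined and equals $s_0 \lhd vz$ (or both are undefined), so in particular $uz \in T \Leftrightarrow vz \in T$, that is $uz \in \L \Leftrightarrow vz \in \L$, giving $u \simeq_{\L} v$. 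Since each $\simeq_{\L}$ class is a union of $\sim$ classes, $\simeq_{\L}$ has at most $|S|+1$ classes.

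The main obstacle is managing the partial transition function cleanly: in the construction direction the resulting automaton may itself be partial (if some class $[u]$ has no element extendable into $\L$, no outgoing edge is forced), and in the converse direction one must recognise that ``undefined'' is an absorbing condition, so that all words $u$ with $s_0 \lhd u$ undefined collapse into a single $\simeq_{\L}$ class with no extensions into $\L$. Once this bookkeeping is handled, both inclusions are immediate.
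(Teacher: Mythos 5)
Your proof is correct, but note that the paper does not actually prove this statement: it records the Myhill--Nerode Theorem as a known result and refers the reader to Lawson's \emph{Finite Automata} (Section 9.6) for a sketch proof, so there is no internal argument to compare against. What you have written is the standard self-contained proof: the quotient $A^*/\!\simeq_{\L}$ serves as the state set of a recognising automaton in one direction, and in the other direction the relation ``reach the same state (or both fail to reach any state)'' is a finite-index refinement of $\simeq_{\L}$. Both halves are handled correctly, and you are right to pay attention to the partially defined transition functions, since that is how the paper sets up its automata; your converse direction deals with the undefined case properly by adding one extra class. One small inaccuracy in your closing remark: the automaton you construct in the forward direction is not partial --- the transition $[u] \lhd a = [ua]$ is defined for every class and every letter, so the resulting automaton is complete (possibly with a ``dead'' class of words with no extension into $\L$); this does not affect the argument, since a complete automaton is in particular an automaton in the paper's sense. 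Your proof could be inserted essentially verbatim where the paper instead defers to the literature.
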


We refer to \cite[Section 9.6]{LwautBook} for more information about, and a sketch proof of this result.

\subsection{Finite index implies finite generation}
In this section, we consider a closed inverse submonoid $L$ that has finite index in a finitely generated inverse monoid
$M$. We shall show that $L$ is finitely generated as a closed inverse submonoid.  Our proof differs from that given in \cite[Theorem 3.7]{MarMea}
for the case $M = \fim(X)$: instead we generalize the approach taken for groups in \cite[Theorem 3.1.4]{BaumBook}.  Recall that a  transversal to
$L$ in $M$ is a choice of one representative element from each coset of $L$.  We always choose the element $1_M$ from the coset $L$
itself.  For $s \in S$ we write $\ol{s}$ for the \textit{representative} of the coset that contains $s$ (if it exists), and note the following:

\begin{lemma}\label{delta}
Let $\T$ be a transversal to $L$ in $M$ and define, \,for $r \in \T$ and $s \in M$,
$\delta (r,s)=rs\,{(\overline{rs})}^{-1} \,.$
Then for all $s,t \in M$, with $ss^{-1}, stt^{-1}s^{-1} \in L$,
\begin{enumerate}
\item $\upset{Ls}= \upset{L\overline{s}}$
\item $\overline{\overline{s}t}= \overline{st}$ 
\item $s \geq \delta (1_M,s)\: \overline{s}\,\,.$
\end{enumerate}
\end{lemma}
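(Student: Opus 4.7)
The plan is to verify each of the three parts in turn, leaning on the coset characterizations in Proposition \ref{cosetsofL} together with the action framework from Lemma \ref{eq_action_cond} and Proposition \ref{transaction}. For part (a), the hypothesis $ss^{-1}\in L$ guarantees that the coset $\upset{Ls}$ exists, and the transversal $\T$ is chosen so that $\ol{s}$ is, by definition, the unique element of $\T$ lying in $\upset{Ls}$. Proposition \ref{cosetsofL}(b) then yields $\upset{L\ol{s}}=\upset{Ls}$ immediately, with no further calculation.

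For part (b), the plan is to apply the action of $t$ to the two equal cosets produced by (a). The hypothesis $stt^{-1}s^{-1}\in L$ is precisely the condition, via Lemma \ref{eq_action_cond}, for $\upset{Ls}\lhd t$ to be defined; the remark following that lemma tells us this condition is independent of the coset representative, so $\ol{s}\,tt^{-1}\ol{s}^{-1}\in L$ as well, and hence $\upset{L\ol{s}}\lhd t$ is also defined. Proposition \ref{transaction} then gives $\upset{Ls}\lhd t=\upset{Lst}$ and $\upset{L\ol{s}}\lhd t=\upset{L\ol{s}t}$. The left-hand sides agree by (a), so the right-hand sides agree, and the uniqueness of the transversal representative forces $\ol{\ol{s}t}=\ol{st}$.

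For part (c), I would simply unpack the definition of $\delta$: since $1_M\cdot s=s$, its representative is $\ol{s}$, so
\[\delta(1_M,s)=1_M\cdot s\cdot\ol{1_M\cdot s}^{-1}=s\,\ol{s}^{-1},\]
whence $\delta(1_M,s)\,\ol{s}=s\,\ol{s}^{-1}\ol{s}$. The element $e=\ol{s}^{-1}\ol{s}$ is idempotent, and the standard right-handed characterisation of the natural partial order on an inverse semigroup gives $se\leq s$ for any idempotent $e$. This yields $s\geq\delta(1_M,s)\,\ol{s}$ as required.

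Overall, none of the three parts presents a genuine obstacle: each is a short appeal to the machinery already established in Section \ref{intro_cosets}. The only place that calls for a little care is part (b), where one must notice that the action-defining condition must be transferred from the representative $s$ to the representative $\ol{s}$; this is exactly what the representative-independence discussion after Lemma \ref{eq_action_cond} provides, and it is really the only non-mechanical step in the proof.
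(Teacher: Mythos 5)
Your proof is correct and fills in what the paper dismisses in one line ("Parts (a) and (c) are clear: part (b) is a special case of Lemma \ref{rt_upset}"); your route for (b) through Lemma \ref{eq_action_cond} and Proposition \ref{transaction} is just a packaged form of the same underlying identity $\upset{Lst}=\upset{\nobraupset{\upset{Ls}}t}$, so the approach is essentially the paper's. Your care in transferring the definedness condition from $s$ to $\ol{s}$ in part (b) is the right point to worry about and is handled correctly.
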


\begin{proof}
Parts (a) and (c) are clear: part (b) is a special case of Lemma \ref{rt_upset}.
\end{proof}

\begin{theorem}
 \label{Schr's lemma}
A closed  inverse submonoid of finite index in a finitely generated inverse monoid is finitely generated as a closed inverse submonoid. 
\end{theorem}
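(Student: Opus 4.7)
The plan is to adapt the classical Schreier generating set construction for subgroups of finite index in groups. Fix a transversal $\T$ to $L$ in $M$ with $1_M$ chosen as the representative of the coset $L$ itself. I would take as candidate finite generating set
\[ Y = \bigl\{\, \delta(r, a\theta) \,:\, r \in \T,\ a \in A,\ \upset{L\, r(a\theta)}\ \text{is a coset of}\ L\,\bigr\}, \]
where $\delta(r,s) = rs\,\overline{rs}^{\,-1}$ is as in Lemma \ref{delta}. The set $Y$ is finite since $|\T| = [M:L]$ and $|A|$ are both finite, and each $\delta(r,a\theta)$ lies in $L$ by Proposition \ref{cosetsofL}(c) applied to $ra\theta$ and its coset representative $\overline{ra\theta}$.

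Given $\ell \in L$, use surjectivity of $\theta : A^\ast \ra M$ to write $\ell = (a_1 \dotsm a_n)\theta$, and set $s_k = (a_1 \dotsm a_k)\theta$, so that $s_0 = 1_M$, $s_n = \ell$, and $s_k = s_{k-1}(a_k\theta)$. Since $s_k s_k^{-1} \geq s_n s_n^{-1} = \ell\ell^{-1} \in L$ for every $k$, closure of $L$ forces $s_k s_k^{-1} \in L$; hence each $s_k$ sits in a well-defined coset with representative $r_k = \overline{s_k}$, and in particular $r_0 = r_n = 1_M$.

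The main step is an induction on $k$ establishing
\[ s_k \geq w_k\, r_k, \qquad \text{where } w_k = \delta(r_0, a_1\theta)\,\delta(r_1, a_2\theta) \dotsm \delta(r_{k-1}, a_k\theta). \]
The base $k=0$ is trivial. For the inductive step, the identity $s_k s_k^{-1} = s_{k-1}(a_k\theta)(a_k\theta)^{-1}s_{k-1}^{-1} \in L$ together with Lemma \ref{eq_action_cond} ensures that the action $\upset{L r_{k-1}} \lhd a_k$ is defined and equals $\upset{L s_k} = \upset{L r_k}$; by Lemma \ref{delta}(b) we then have $\overline{r_{k-1}(a_k\theta)} = r_k$, so $\delta(r_{k-1}, a_k\theta) = r_{k-1}(a_k\theta)\, r_k^{-1}$ is a legitimate element of $Y$. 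Multiplying the inductive hypothesis on the right by $a_k\theta$ and then inserting the idempotent $r_k^{-1}r_k$ gives
\[ s_k = s_{k-1}(a_k\theta) \geq w_{k-1}\, r_{k-1}(a_k\theta) \geq w_{k-1}\, r_{k-1}(a_k\theta)\, r_k^{-1} r_k = w_k\, r_k, \]
completing the step. Taking $k = n$ and using $r_n = 1_M$ yields $\ell \geq w_n$, exhibiting $\ell$ as dominated by a product of elements of $Y$, so $Y$ generates $L$ as a closed inverse submonoid.

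The main obstacle, such as it is, is the bookkeeping needed when the group-theoretic equalities are replaced by the partial-order inequalities available in an inverse monoid: at each stage one must check that the cosets traversed by reading $a_1 \dotsm a_n$ really are defined, so that the Schreier generators $\delta(r_{k-1}, a_k\theta)$ genuinely lie in $Y$ and in $L$. The uniform domination $s_k s_k^{-1} \geq \ell\ell^{-1} \in L$ disposes of this in one stroke, after which the Schreier-style argument proceeds cleanly in parallel with the classical group case.
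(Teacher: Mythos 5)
Your proposal is correct and follows essentially the same route as the paper: both construct the Schreier-type generators $\delta(r,a)$ for $r\in\T$ and $a\in A$, use the prefix computation $s_k s_k^{-1} \geq \ell\ell^{-1}\in L$ together with closedness of $L$ to guarantee that every intermediate coset exists, and then insert the idempotents $r_k^{-1}r_k$ to dominate $\ell$ by a product of the $\delta$'s. Your explicit induction is just a tidier write-up of the telescoping inequality the paper displays directly.
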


\begin{proof}
Let $M$ be an inverse monoid generated by a set $X$.  We set $A = X \sqcup X^{-1}$: then each $s \in M$ can be expressed as a product $s=a_{1}a_{2} \dotsm a_{n}$
where $a_i \in A$. Suppose that $L$ is a closed inverse subsemigroup of finite index in $M.$  Let $\T$ be a transversal to $L$ in $M$. Given $h \in L,$ we write  $h=x_1 x_2 \dotsm x_n$ and consider the prefix $h_i = x_1 x_2 \dotsm x_i$ for
$1 \leq i \leq n$.  Since
\[ h_ih_i^{-1}hh^{-1}= h_ih_i^{-1}h_ix_{i+1} \dotsm x_n h^{-1}= h_ix_{i+1} \dotsm x_nh^{-1} = hh^{-1}, \]
we have $h_ih_i^{-1} \geq hh^{-1}$.  But $hh^{-1} \in L$ and $L$ is closed, so that $h_ih_i^{-1} \in L$.
Therefore the coset $\upset{Lh_i}$ exists, and so does the representative $\ol{h_i}$.  Now 
$$h = x_1x_2 \dotsm x_n \geq x_1 \cdot \ol{h_1}^{-1} \ol{h_1} \cdot x_{2} \cdot \ol{h_2}^{-1} \overline{h_2} \cdot  x_3  \cdot \ol{h_3}^{-1} \dotsm
\ol{h_{n-1}} \cdot  x_{n}\,.$$

By part (b) of Lemma \ref{delta} we have $\ol{h_j} = \ol{\ol{h_{j-1}}x_j}$ and so
$$h \geq x_1 \cdot \ol{x_1}^{-1} \ol{x_1} \cdot x_{2} \cdot (\ol{\ol{h_1}x_2})^{-1} \cdot \overline{h_2} \cdot  x_3  \cdot (\ol{\ol{h_2}x_3})^{-1} \dotsm
\ol{h_{n-1}} \cdot  x_{n} \,.$$ting that $1_M=\overline{x_1x_2 \dotsm x_n}$, we have
Now using the elements $\delta(r,s)$ from Lemma \ref{delta}, and no
$$h \geq \delta (1_M,x_{1}) \delta (\overline{x_1},x_2) \delta (\overline{h_2},x_3) \dotsm \delta(\ol{h_{n-1}},x_n) \,.$$
Finally, since $\upset{Lrs} = \upset{L\ol{rs}}$ then it follows from  Proposition
\ref{cosetsofL}(3) that $\delta(r,s) \in L$.  Hence $L$ is generated as a closed inverse submonoid of $M$ by the elements
$\delta(r,x)$ with $r \in \T$ and $x \in A$.
\end{proof}

\subsection{Recognizable closed inverse submonoids}
\begin{theorem}
\label{recog_iff_findex}
Let $L$ be a closed inverse submonoid of a finitely generated inverse monoid $M$.  Then the following are equivalent:
\begin{enumerate}
\item[(a)] $L$ is recognized by a
finite inverse automaton,
\item[(b)] $L$ has finite index in $M$,
\item[(c)] $L$ is a  recognizable subset of $M$.
\end{enumerate}
\end{theorem}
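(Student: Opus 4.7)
The plan is to establish the cycle (b) $\Rightarrow$ (a) $\Rightarrow$ (b) $\Rightarrow$ (c) $\Rightarrow$ (b) by routing every implication through the coset automaton $\C(M:L)$. The decisive tool is Lemma \ref{coset_aut}, which supplies $\C(M:L)$ as an inverse automaton recognising $L\theta^{-1}$ and, crucially, identifies it as the \emph{minimal} DFA for $L\theta^{-1}$ whether or not $[M:L]$ is finite. This minimality lets me convert the finiteness of any recognising object into finiteness of $[M:L]$.

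For (b) $\Rightarrow$ (a), the coset automaton has exactly $[M:L]$ states and so is finite by hypothesis; Lemma \ref{coset_aut} then supplies both the inverse structure and the recognition of $L\theta^{-1}$, which is exactly what (a) asks for. Conversely, for (a) $\Rightarrow$ (b), a finite inverse automaton recognising $L\theta^{-1}$ exhibits $L\theta^{-1}$ as a recognisable language in $A^*$, so its minimal DFA is finite; by Lemma \ref{coset_aut} that minimal DFA is $\C(M:L)$ itself, and therefore the number of cosets $[M:L]$ is finite.

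For (b) $\Rightarrow$ (c), I will re-use the coset-action construction from the proof of Theorem \ref{Hall's theorem}: the action of $M$ on the finite set $D=\{1,\dots,d\}$ of cosets yields a monoid homomorphism $\phi_L: M \to \curlyI(D)$ to the finite inverse monoid $\curlyI(D)$, and the identification $\stab(1)=L$ established there gives $L = P\phi_L^{-1}$ with $P = \{f \in \curlyI(D) : 1f = 1\}$, exhibiting $L$ as recognisable in $M$. For the remaining implication (c) $\Rightarrow$ (b), if $L = P\phi^{-1}$ for a monoid homomorphism $\phi: M \to N$ with $N$ finite, then $L\theta^{-1} = P(\phi\theta)^{-1}$ is recognisable in $A^*$ and therefore has a finite DFA, so the minimality argument from (a) $\Rightarrow$ (b) once again forces $\C(M:L)$ to have finitely many states.

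The main obstacle is organisational rather than technical: trying to translate an abstract finite-monoid recognition or an arbitrary finite inverse automaton into an explicit enumeration of cosets is awkward. The cleaner strategy is to channel every implication through $\C(M:L)$ and let Reutenauer's minimality, bundled inside Lemma \ref{coset_aut}, do the heavy lifting by transferring finiteness from any recognising object back to the canonical coset automaton.
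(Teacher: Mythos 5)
Your proposal is correct, and for three of the four implications it coincides with the paper's own proof: (b) $\Rightarrow$ (a) and (a) $\Rightarrow$ (b) via the finiteness and minimality of the coset automaton from Lemma \ref{coset_aut}, and (b) $\Rightarrow$ (c) via the homomorphism $\phi_L : M \to \curlyI(D)$ from the proof of Theorem \ref{Hall's theorem} with $L = \stab(1)\phi_L^{-1}$. Where you genuinely diverge is (c) $\Rightarrow$ (b). The paper argues directly: it takes the recognizable language $\L = L\theta^{-1}$, invokes the Myhill--Nerode Theorem \ref{nerode Thm} to get finitely many $\simeq_{\L}$-classes, and then shows that $u \simeq_{\L} v$ forces $\upset{L(u\theta)} = \upset{L(v\theta)}$ whenever the former exists, so the classes surject onto the cosets. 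You instead reduce (c) to the situation of (a): the composite $A^* \to M \to N$ exhibits $L\theta^{-1}$ as recognizable in $A^*$, hence with a finite minimal DFA, and minimality of $\C(M:L)$ finishes the job. Your version is more economical, funnelling both finiteness transfers through a single principle; the paper's version buys something in return, namely that its (c) $\Rightarrow$ (b) argument is self-contained and does not lean a second time on the claim that $\C(M:L)$ is minimal --- a claim which Lemma \ref{coset_aut} formally establishes only under the finite-index hypothesis, and whose use on a not-yet-known-to-be-finite coset automaton is a wrinkle already present in the paper's own (a) $\Rightarrow$ (b) step (the paper's Myhill--Nerode argument is in effect the rigorous substitute: it bounds the number of cosets by the number of $\simeq_{\L}$-classes without any appeal to Reutenauer's lemma). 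So your route is no less rigorous than the paper's (a) $\Rightarrow$ (b), but it concentrates all the weight on that one minimality assertion, whereas the paper hedges by giving an independent argument for (c) $\Rightarrow$ (b).
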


\begin{proof}
If $L$ has finite index in $M$, then by Lemma \ref{coset_aut}, its coset automaton $\C(M:L)$ is a finite inverse
automaton that recognizes $L$.  Conversely, suppose that $\A$ is a finite inverse automaton that recognizes $L$.  Again by Lemma \ref{coset_aut}, the coset automaton $\C$ is minimal, and so must be finite.  Hence (a) and (b) are equivalent.

If (b) holds, then as in the proof of Theorem \ref{Hall's theorem}, we obtain a homomorphism $M \ra \curlyI(D)$  for which $L$ is te invese image of the stabilizer of $1$.  Therefore
(b) implies (c).

We have $M$ generated by $X$, with $A = X \sqcup X^{-1}$, and a monoid homomorphism $\theta : A^* \ra M$.
To prove that (c) implies (b), suppose that $L$ is recognizable and so the language 
$\L=\{w\in A^*:  w \theta \in L\}$ is recognizable.  By Theorem \ref{nerode Thm}, the equivalence relation $ \simeq_{\L}$ on $A^*$  has finitely many classes. We claim that if $u \simeq_{\L} v$ and if $\upset{L(u \theta)}$ exists, then $\upset{L(v \theta)}$ exists and 
$\upset{L(u \theta)} = \upset{L(v \theta)}$.
Now $(u\theta)\,(u\theta)^{-1} = (uu^{-1})\theta  \in L$ and hence $uu^{-1} \in \L$. But by assumption $u \simeq_{\L} v,$ and so $vu^{-1} \in \L$,  which implies that $(v\theta)(u\theta)^{-1} \in L.$ By part (c) of Proposition \ref{cosetsofL}, $\upset{L(v \theta)}$ exists and
$\upset{L(v\theta)} = \upset{L(u\theta)}$.
Since $ \simeq_L$ has only finitely many classes, there are only finitely many cosets of $L$ in $M$.  Hence
(c) implies (b).
\end{proof}

\subsection{Rational Generation}
In this section we give an automata-based proof of part of \cite[Theorem 3.7]{MarMea}.  We adapt the
approach used in \cite[Theorem II]{FrouSakSch} to the proof of the following theorem of Anisimov and Seifert.

\begin{theorem}{\cite[Theorem 3]{AnisSeif}}
\label{anisseif}
A subgroup of a finitely generated group $G$ is a rational subset of $G$ if and only if it is finitely generated.
\end{theorem}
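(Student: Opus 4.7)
The plan is to handle the two directions separately: the reverse direction is a one-line observation, and the forward direction requires a Schreier--Reidemeister-style argument at the level of a finite automaton for $H$.

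For ($\Leftarrow$), if $H = \langle h_1, \dots, h_n \rangle$ as a subgroup and $Y = \{h_1^{\pm 1}, \dots, h_n^{\pm 1}\}$, then $H$ equals the submonoid $Y^*$ of $G$ generated by $Y$, which is rational by the very definition of a rational subset.

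For ($\Rightarrow$), fix a finite generating set $X$ of $G$, set $A = X \sqcup X^{-1}$, and let $\theta \colon A^* \to G$ be the induced morphism (so $\theta(a^{-1}) = \theta(a)^{-1}$). If $H$ is rational, then $H = L\theta$ for some regular $L \subseteq A^*$ and, by Kleene's theorem, $L$ is recognized by a finite trim NFA $\A = (Q, A, \delta, q_0, F)$. From $\A$ I would extract an explicit finite subset $S \subseteq H$ generating $H$, as follows. For each state $q \in Q$ pick a word $\alpha_q \in A^*$ labeling a path $q_0 \to q$, with $\alpha_{q_0} = \varepsilon$; pick a word $\beta_q$ labeling a path from $q$ to some $f \in F$, with $\beta_f = \varepsilon$ for every $f \in F$. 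For each transition $(p,a,q) \in \delta$ the word $\alpha_p a \beta_q$ lies in $L$, and for each $q \in Q$ the word $\alpha_q \beta_q$ lies in $L$; define
\[ S \;=\; \bigl\{\, s_{p,a,q} = (\alpha_p a \beta_q)\theta : (p,a,q) \in \delta \,\bigr\} \;\cup\; \bigl\{\, r_q = (\alpha_q \beta_q)\theta : q \in Q \,\bigr\} \;\subseteq\; H. \]

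To show $H = \langle S \rangle$, given $h \in H$ I would write $h = w\theta$ with $w = a_1 \cdots a_n \in L$ and fix an accepting run $q_0 \xrightarrow{a_1} q_1 \xrightarrow{a_2} \cdots \xrightarrow{a_n} q_n \in F$. The key telescoping identity, easily verified in $G$ by inserting and cancelling $\beta_{q_i}$ with its formal inverse, is
\[ s_{q_{i-1},a_i,q_i} \cdot r_{q_i}^{-1} \;=\; \bigl(\alpha_{q_{i-1}} \cdot a_i \cdot \alpha_{q_i}^{-1}\bigr)\theta. \]
Multiplying over $i=1,\dots,n$ and using $\alpha_{q_0} = \varepsilon$ collapses the product to $h \cdot (\alpha_{q_n})\theta^{-1}$; since $\beta_{q_n} = \varepsilon$ we have $r_{q_n} = (\alpha_{q_n})\theta$, and therefore $h = \prod_{i=1}^{n} \bigl(s_{q_{i-1},a_i,q_i}\, r_{q_i}^{-1}\bigr) \cdot r_{q_n} \in \langle S \rangle$.

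The main obstacle is the bookkeeping that every word whose $\theta$-image we invoke actually lies in $L$, so that its image lies in $H$ (where we are then free to take group inverses of elements of $S$); this is what forces the choice $\beta_f = \varepsilon$ for $f \in F$ and the inclusion of the auxiliary ``closure'' elements $r_q$ alongside the transition elements $s_{p,a,q}$. A secondary care-point is being explicit about the involution on $A$, so that passing between the formal inverse word $\alpha_q^{-1} \in A^*$ and the group inverse $((\alpha_q)\theta)^{-1} \in G$ is unambiguous.
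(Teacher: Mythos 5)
Your proof is correct, but note that the paper does not actually prove this statement: Theorem \ref{anisseif} is imported verbatim from \cite{AnisSeif} as a known result, and the only argument in this circle that the paper carries out is the proof of its inverse-monoid analogue, Theorem \ref{rational gen}, which follows the pumping-lemma strategy of \cite{FrouSakSch}. Your route is genuinely different from that one. You work directly with a trim finite automaton for a regular $L \subseteq A^*$ with $L\theta = H$ and extract Schreier-type generators $s_{p,a,q} = (\alpha_p a \beta_q)\theta$ and $r_q = (\alpha_q\beta_q)\theta$; the telescoping identity $s_{q_{i-1},a_i,q_i} r_{q_i}^{-1} = (\alpha_{q_{i-1}} a_i \alpha_{q_i}^{-1})\theta$ is verified correctly, the bookkeeping ensuring every invoked word lies in $L$ (trimness, $\alpha_{q_0}=\varepsilon$, $\beta_f=\varepsilon$) is sound, and the argument even yields the explicit bound $|S| \leq |\delta| + |Q|$ on the number of generators. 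The paper's preferred method instead applies the pumping lemma to a rational $R$ with $R\theta = (Y\cup Y^{-1})^*\theta$ and produces generators of the conjugate form $(uvu^{-1})\theta$ with $|uv|$ bounded by the pumping constant, arguing by induction on word length. The reason the paper leans on that version is that it survives the passage to closed inverse submonoids: your telescoping step inserts $\beta_{q_i}\beta_{q_i}^{-1}$ and cancels it exactly, which is special to groups --- in an inverse monoid $\beta\beta^{-1}$ is merely an idempotent, so equalities degrade to inequalities $h \geq (\text{product})$ in the natural partial order, and the pumping-lemma formulation with conjugates $uvu^{-1}$ is the one that packages this correctly (compare the step $n \geq (uvu^{-1})\theta\,(uz)\theta$ in the proof of Theorem \ref{rational gen}). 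For the group-theoretic statement as quoted, your argument is a complete and arguably more constructive proof.
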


\begin{theorem}
\label{rational gen}
Let $L$ be a closed inverse submonoid  of a finitely generated inverse 
monoid $M$.
Then $L$ is generated as a closed inverse submonoid by a rational subset if and only if $L$ is 
generated as a closed inverse submonoid by a finite subset.
\end{theorem}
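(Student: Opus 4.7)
The easy direction is immediate, since any finite subset of $M$ is rational. For the converse, assume $L = \nobraupset{\< R \>}$ with $R$ rational, and proceed by induction on the star-height $h$ of $R$. The base case $h=0$ is trivial because $R$ is then finite. Suppose $h \geq 1$ and that the result holds for all generating sets of star-height strictly less than $h$. Writing $R$ as a finite union of products of the form $B_1 B_2 \cdots B_m$, with each $B_i$ either a singleton or $C_i^*$ for some $C_i$ of star-height less than $h$, it suffices (by taking the union of finite generating sets for the closed inverse submonoids generated by the individual summands) to treat a single such product $R = B_1 B_2 \cdots B_m$.

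For such an $R$, set $B_i' = B_i$ when $B_i$ is a singleton and $B_i' = \{1_M\} \cup C_i$ when $B_i = C_i^*$, and let $R' = B_1' B_2' \cdots B_m'$. Then $R'$ is rational of star-height less than $h$ and $R' \subseteq R \subseteq L$. The plan is to prove $\nobraupset{\< R \>} = \nobraupset{\< R' \>}$; the outer inductive hypothesis applied to $R'$ then yields the desired finite generating set for $L$.

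The inclusion $\nobraupset{\< R' \>} \subseteq \nobraupset{\< R \>}$ is clear, so it remains to show $R \subseteq \nobraupset{\< R' \>}$. Write each $r \in R$ as $r = \prod_i b_i$, with $b_i = c_{i,1} c_{i,2} \cdots c_{i,k_i}$ in the slots where $B_i = C_i^*$, and proceed by an inner induction on $\mu(r) = \sum_i \max(k_i - 1,\, 0)$. If $\mu(r)=0$ then every slot uses at most one element of the corresponding $C_i$, so $r \in R'$ directly. Otherwise pick $j$ with $k_j \geq 2$, set $P = b_1 \cdots b_{j-1}$ and $S = b_{j+1} \cdots b_m$, and form the three elements
\[ r' = P \cdot c_{j,1} \cdots c_{j,k_j-1} \cdot S , \qquad r_{\mathrm{mid}} = P S , \qquad r'' = P \cdot c_{j,k_j} \cdot S , \]
each of which lies in $R$ with strictly smaller $\mu$-value than $r$. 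A direct calculation gives
\[ r' \, r_{\mathrm{mid}}^{-1} \, r'' \;=\; P \cdot c_{j,1} \cdots c_{j,k_j-1} \cdot (SS^{-1})(P^{-1}P) \cdot c_{j,k_j} \cdot S , \]
namely the expression for $r$ with the idempotent $(SS^{-1})(P^{-1}P)$ inserted between $c_{j,k_j-1}$ and $c_{j,k_j}$. The general identity $x e y = (x e x^{-1})(x y) \leq x y$, valid for any idempotent $e$ of an inverse monoid (since $x e x^{-1}$ is again idempotent), then yields $r' \, r_{\mathrm{mid}}^{-1} \, r'' \leq r$. By the inner induction each of $r', r_{\mathrm{mid}}, r''$ belongs to $\nobraupset{\< R' \>}$, so their product does as well, and closure forces $r \in \nobraupset{\< R' \>}$.

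The main obstacle is the pumping inequality $r' \, r_{\mathrm{mid}}^{-1} \, r'' \leq r$; once this is in hand the two nested inductions run smoothly. It rests on the feature of inverse monoids that inserting an idempotent into a product only moves the element downward in the natural partial order, which is precisely what allows the closure operation to recover all of $R$ from the smaller, lower-star-height set $R'$.
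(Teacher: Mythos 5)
Your proof is correct, but it takes a genuinely different route from the paper's. The paper pulls the problem back to the free monoid $A^*$ on the generators of $M$: it chooses a rational language $R\subseteq A^*$ mapping onto $(Y\cup Y^{-1})^*$, applies the pumping lemma with constant $C$, and shows that the images of the short conjugates $uvu^{-1}$ with $|uv|\leq C$ generate $L$ closedly, via a minimal-length counterexample argument. You instead stay entirely inside $M$ and induct on the star-height of the rational generating set, replacing each starred factor $C_i^*$ by $\{1_M\}\cup C_i$ and showing the generated closed inverse submonoid is unchanged. Notably, both arguments ultimately turn on the same algebraic fact — inserting an idempotent into a product moves the element down in the natural partial order — which appears in the paper as $(uvz)\theta\geq(uvu^{-1}uz)\theta$ and in your proof as $r'\,r_{\mathrm{mid}}^{-1}\,r''\leq r$. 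What your version buys is independence from the pumping lemma and from the choice of a language over $A^*$ representing the generating set; it uses only the star-height stratification of $\Rat(M)$, which the paper already sets up for Lemma \ref{star-height}, and so is arguably more self-contained. What the paper's version buys is an explicit finite generating set with a concrete size bound coming from the pumping constant, and a closer parallel with the Anisimov--Seifert and Frougny--Sakarovitch--Schupp arguments for groups. Two small presentational points: your inner induction is really on the chosen factorisation of $r$ rather than on $r$ itself (since $\mu$ depends on the representation), and you should remark that $\nobraupset{\< R'\>}$ is itself a closed inverse submonoid (so that it absorbs the heap product $r'\,r_{\mathrm{mid}}^{-1}\,r''$) and that $\Rat_{h-1}(M)$ is closed under finite products and unions so that $R'$ really has star-height at most $h-1$; none of these affects the validity of the argument.
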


\begin{proof}
Since finite sets are rational sets, one half of the theorem is trivial. 

So suppose that $L$ is generated (as a closed inverse submonoid) by some rational subset $Y$ of $L$.  As above, if 
$M$ is generated (as an inverse monoid) by $X$, we take $A = X \cup X^{-1}$,
and  let $ \theta$ be the obvious map $A^* \ra M$.  
Then $Z = (Y \cup Y^{-1})^*$ is rational and so there exists a rational language
$R$ in $A^*$ such that $R  \theta = Z$.  The pumping lemma for $R$ then tells us that there exists a constant $C$ 
such that, if $w \in R$ with $|w| > C$, then
$w=uvz$ with $|uv| \leq C, |v| \geq 1$, and $uv^iz \in R$ for all $i \geq 0$. We set
\[ U = \{ uvu^{-1} : u,v \in A^*, |uv| \leq C, (uvu^{-1}) \theta\in L \} \]
and $V = \nobraupset{ \<U \theta \>}$.
Clearly $U$ is finite, and $V \subseteq L$.  We claim that $L=V$, and so we shall show that $R \theta \subseteq V$.

We first note that if $w \in R$ and $|w| \leq C$ then $w \in U$ (take $u=1, 
v=w$) and so $w \theta \in V$.  Now suppose that $|w|>C$ but that there exists $n \in L \setminus V$ with $n \geq w \theta$
Choose $|w|$ minimal.
The pumping lemma gives $w=uvz$ as above. Since $|uz| < |w|$
it follows that $(uz)  \theta \in V$.

Moreover,
\[ (uvu^{-1})  \theta \geq (uvzz^{-1}u^{-1}) \theta = (uvz)  \theta ((uz) 
\theta)^{-1} = (w  \theta)\,((uz) \theta)^{-1}. \]
Now $w  \theta \in L$ and $(uz)  \theta \in V$ : since $L$ is closed, 
$(uvu^{-1}) \theta \in L$ and therefore $uvu^{-1} \in U$.
Now
\[ n \geq w \theta = (uvz) \theta \geq (uvu^{-1}uz) \theta = (uvu^{-1})\theta\,\, 
(uz) \theta \in V . \]

Since $V$ is closed, $n \in V$.  But this is a contradiction.  Hence $L=V$.
\end{proof}

\begin{cor}
\label{rat_imp_fg}
If a closed inverse submonoid  $L$ of a finitely generated inverse 
monoid $M$ is a rational subset of $M$ then it is 
finitely generated as a closed inverse submonoid.
\end{cor}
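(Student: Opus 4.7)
The plan is to deduce this statement as an immediate consequence of Theorem \ref{rational gen}. That theorem is an equivalence: for a closed inverse submonoid of a finitely generated inverse monoid, being generated as a closed inverse submonoid by a rational subset is the same as being generated as a closed inverse submonoid by a finite subset. So the corollary only requires us to put the hypothesis ``$L$ is a rational subset of $M$'' into the shape ``$L$ is generated as a closed inverse submonoid by a rational subset.''

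The key, and essentially tautological, observation is that any closed inverse submonoid $L$ of $M$ is generated, as a closed inverse submonoid, by the set $L$ itself: by definition, the smallest closed inverse submonoid of $M$ containing $L$ equals $L$, because $L$ is already a closed inverse submonoid. Hence if $L \in \Rat(M)$, then taking $Y = L$ exhibits $L$ as a closed inverse submonoid of $M$ that is generated (as a closed inverse submonoid) by a rational subset.

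Applying Theorem \ref{rational gen} with this choice of $Y$ then produces a finite subset of $L$ that generates $L$ as a closed inverse submonoid, which is the desired conclusion. There is no genuine obstacle here; the entire content of the corollary is already carried by Theorem \ref{rational gen}, and the only step is to recognise that the rationality of $L$ as a subset of $M$ automatically provides a rational generating set for $L$ viewed as a closed inverse submonoid.
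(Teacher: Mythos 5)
Your proposal is correct and is exactly the paper's own argument: the paper's proof likewise observes that $L$ is generated as a closed inverse submonoid by the rational set $L$ itself and then invokes Theorem \ref{rational gen}. Nothing further is needed.
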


\begin{proof}
If $L$ is a rational subset of $M$ then it is certainly generated by a rational set, namely $L$ itself.
\end{proof}

However, the converse of Corollary \ref{rat_imp_fg} is not true. We shall use the following Lemma to validate a counterexample
in Example \ref{f2ab}.

\begin{lemma}
\label{star-height}
Let $M$ be a semilattice of groups $G_1 \sqcup G_0$ over the semilattice $1>0$, and suppose that $T$ is a rational subset of
$M$ of star-height $h$.  Then $G_1 \cap T$ is a rational subset of $G_1$.
\end{lemma}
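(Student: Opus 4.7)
The plan is to prove the lemma by induction on the star-height $h$ of $T$. The key structural observation is that since $1 \wedge 0 = 0$ in the semilattice, the group $G_0$ is an ideal of $M$: whenever one factor of a product lies in $G_0$, the whole product does. Equivalently, a product $m_1 m_2 \dotsm m_k$ in $M$ lies in $G_1$ if and only if every factor $m_i$ lies in $G_1$. This is what will let intersection with $G_1$ commute nicely with both concatenation and Kleene star.

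For the base case $h = 0$, $T$ is a finite subset of $M$, so $G_1 \cap T$ is finite and therefore trivially rational in $G_1$. For the inductive step, suppose $T \in \Rat_{h+1}(M)$, and write $T = \bigcup_{j} T_j$ as a finite union of products $T_j = B^j_1 B^j_2 \dotsm B^j_{m_j}$, where each $B^j_i$ is either a singleton or of the form $(C^j_i)^*$ with $C^j_i \in \Rat_h(M)$. Since intersection distributes over union and finite unions of rational sets are rational, it suffices to show that each $G_1 \cap T_j$ is rational in $G_1$.

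Fix one such $T_j = B_1 \dotsm B_m$. Using the ideal property of $G_0$, I would first verify the identity
\[ G_1 \cap (B_1 \dotsm B_m) = (B_1 \cap G_1)(B_2 \cap G_1) \dotsm (B_m \cap G_1); \]
the inclusion $\supseteq$ holds because $G_1$ is closed under products, while $\subseteq$ is exactly the statement that every factor of a product lying in $G_1$ must itself lie in $G_1$. For each $B_i$: if $B_i = \{s_i\}$ is a singleton, then $B_i \cap G_1$ is either empty or $\{s_i\}$, in either case trivially rational in $G_1$; if $B_i = C_i^*$, then I would check the companion identity
\[ C_i^* \cap G_1 = (C_i \cap G_1)^*, \]
which is again immediate from the ideal property (any factorisation in $C_i^*$ of an element of $G_1$ uses only factors from $C_i \cap G_1$, and conversely any such product lies in $G_1 \cap C_i^*$ since $G_1$ is closed under products and contains the identity). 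By the induction hypothesis $C_i \cap G_1$ is a rational subset of $G_1$, hence so is its Kleene star. Therefore $G_1 \cap T_j$ is a finite product of rational subsets of $G_1$, and hence rational in $G_1$, completing the induction.

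The main obstacle is really only bookkeeping; the content lies entirely in the single observation that $G_0$ is an ideal, which is what simultaneously delivers the two distributivity identities (for product and for star). Once those are in hand, the rest is a straightforward structural induction on the definition of $\Rat_h(M)$, and nothing more delicate than the ideal property is required.
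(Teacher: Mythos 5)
Your proof is correct and takes essentially the same route as the paper's: an induction on star-height in which the observation that $G_0$ is an ideal of $M$ yields the two distributivity identities $G_1 \cap (B_1 \dotsm B_m) = (G_1\cap B_1)\dotsm(G_1\cap B_m)$ and $G_1 \cap C^* = (G_1\cap C)^*$. The paper's argument is the same in both structure and substance, so there is nothing to add.
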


\begin{proof}
We proceed by induction on $h$.  If $h=0$ then $T$ is finite, and $G_1 \cap T$ is a finite subset of $G_1$ and so is a rational
subset of $G_1$, also of star-height $h_1=0$.

If $h>0,$ then, as in section \ref{fg_and_fi}, $T$ is a finite union $T = S_1 \cup \dotsb \cup S_k$
where each $S_j$ is a product $S_j = R_1 R_2 \dotsm R_{m_j}$ and where each $R_i$ is either a singleton subset of $M$ or $R_i = Q_i^*$ for some rational subset $Q_i$ of $M$ of star-height $h-1$ (see \cite[Chapter III]{Ber}).
Hence
\[ G_1 \cap T = (G_1 \cap S_1) \cup \dotsb \cup (G_1 \cap S_k) \,.\]
Consider the subset $G_1 \cap S_j = G_1 \cap R_1 R_2 \dotsm R_{m_j}$.  We claim that
\begin{equation} \label{rat_prod} G_1 \cap R_1 R_2 \dotsm R_{m_j} = (G_1 \cap R_1)(G_1 \cap R_2) \dotsm (G_1 \cap R_{m_j}) \,.  \end{equation}
The inclusion $\supseteq$ is clear, and so now we suppose that $g \in G_1$ is a product
$g = r_1 r_2 \dotsm r_{m_j}$ with $r_i \in R_i$.  If any $r_i \in G_0$ then $g \in G_0$: hence each $r_i \in G_1$ and so
$g \in (G_1 \cap R_1)(G_1 \cap R_2) \dotsm (G_1 \cap R_{m_j}) \,,$ confirming \eqref{rat_prod}.

The factors on the right of \eqref{rat_prod} are either singleton subsets of $G_1$, or are of the form $G_1 \cap Q_i^*$
where $Q_i$ is a rational subset of $M$ of star-height $h-1$.  However, $G_1 \cap Q_i^* = (G_1 \cap Q_i)^*$: the inclusion $G_1 \cap Q_i^* \supseteq (G_1 \cap Q_i)^*$ is again obvious,
and $G_1 \cap Q_i^* \subseteq (G_1 \cap Q_i)^*$ since if $w = x_1 \dots x_m \in Q_i^*$ and some $x_j \in G_0$ then
$w \in G_0$.  It follows that if $w \in G_1 \cap Q_i^*$ then $x_j \in G_1$ for all $j$.

Hence $G_1 \cap T$ is a union of  subsets, each of which is a product of singleton subsets of $G_1$ and subsets of the form $(G_1 \cap Q_i)^*$ where, by induction
$G_1 \cap Q_i$ is  a rational subset of $G_1$ of star-height $h_2 \leq h-1$.  Therefore $G_1 \cap T$ is a rational subset of $G_1$.
\end{proof}

\begin{cor}\label{rational_clifford}
Let $L = L_1 \sqcup L_0$, where $L_j \subseteq G_j$, be an inverse submonoid of $M$ that is also a rational subset of $M$.  Then $L_1$ is a rational subset of $G_1$.
\end{cor}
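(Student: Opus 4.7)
The plan is to reduce this corollary directly to Lemma \ref{star-height}, which has already done essentially all of the work. First I would observe that, since $L$ is by hypothesis a rational subset of $M$, it lies in $\Rat_h(M)$ for some finite $h$, so the lemma may be applied with $T = L$. This immediately yields that $G_1 \cap L$ is a rational subset of $G_1$.

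Next I would verify the set-theoretic identification $G_1 \cap L = L_1$. Because $L$ is given as the disjoint union $L_1 \sqcup L_0$ with $L_1 \subseteq G_1$ and $L_0 \subseteq G_0$, an element of $L$ lies in $G_1$ precisely when it belongs to $L_1$; indeed $L_0 \cap G_1 = \emptyset$ since $G_0$ and $G_1$ partition $M$. Hence $L_1 = G_1 \cap L$ is rational in $G_1$, which is exactly the statement to be proved.

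There is really no serious obstacle here: Lemma \ref{star-height} was formulated in enough generality (it is phrased for an arbitrary rational $T$, not specifically for an inverse submonoid) that this corollary drops out with no further induction or combinatorial argument. The only mild subtlety worth mentioning in the write-up is that we do \emph{not} need to assume anything extra about $L$ as an inverse submonoid beyond the decomposition compatible with the semilattice structure, since the conclusion is purely about rationality in $G_1$ and relies only on the intersection $G_1 \cap L$.
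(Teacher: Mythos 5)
Your argument is exactly the paper's proof: apply Lemma \ref{star-height} with $T=L$ and identify $G_1\cap L$ with $L_1$ via the disjoint decomposition. Correct, and no further comment is needed.
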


\begin{proof}
Take $T=L$: then $G_1 \cap L = L_1$ is a rational subset of $G_1$.
\end{proof}
Now, we show that the converse of Corollary \ref{rat_imp_fg} is not true.

\begin{example}
\label{f2ab}
Let $F_2$ be a free group of rank $2$ and consider the semilattice of groups  $M = F_2 \sqcup F_2^{ab}$ determined by the abelianisation map
$\alpha : F_2 \ra F_2^{ab}$.   The kernel of $\alpha$ is the commutator subgroup
$F_2'$ of $F_2$ and we let $K$ be the closed inverse submonoid $F_2' \sqcup \{ {\mathbf 0} \}$.
\[ \xymatrixcolsep{3pc}
\xymatrix{
F_2' \ar[d] \ar[r] & F_2 \ar[d]^{\alpha}\\
\{ {\mathbf 0} \} \ \ar[r] & F_2^{ab}}
\]
Now $K$ is generated (as a closed inverse submonoid) by $\{ {\mathbf 0} \}$ and so is finitely generated.  But $F_2'$ is not
finitely generated as a group (see \cite[Example III.4(4)]{BaumBook}) and so is not a rational subset of $F_2$ by
Theorem \ref{anisseif}.  Therefore, by Corollary \ref{rational_clifford}, $K$ is not a rational subset of $M$.

This example also gives us a counterexample to the converse of Theorem \ref{Schr's lemma}: $K$ is
finitely generated as a closed inverse submonoid, but has infinite index in $M$.
\end{example}

\section{Conclusion}
We summarize our findings about the conditions considered by Margolis and Meakin in \cite[Theorem 3.5]{MarMea}.

\begin{theorem}
\label{gen_MMthm}
Let $L$ be a closed inverse submonoid of the finitely generated inverse monoid $M$ and consider the following properties that $L$
might possess:
\begin{enumerate}
\item[(a)] $L$ is recognized by a finite inverse automaton,
\item[(b)] $L$ has finite index in $M$,
\item[(c)] $L$ is a  recognizable subset of $M$,
\item[(d)] $L$ is a rational subset of $M$,
\item[(e)] $L$ is finitely generated as a closed inverse submonoid of $M$..
\end{enumerate}
Then properties (a), (b) and (c) are equivalent: each of them implies (d), and (d) implies (e).  The latter two implications
are not reversible.
\end{theorem}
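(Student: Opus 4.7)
The plan is to assemble the theorem from results already established in this section, supplemented by Theorem \ref{mcknight} and Example \ref{f2ab}; no essentially new argument is needed. The equivalence of (a), (b) and (c) is precisely the content of Theorem \ref{recog_iff_findex}, so that portion of the statement can be cited verbatim.

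For (c) $\Rightarrow$ (d), I would appeal directly to McKnight's theorem (Theorem \ref{mcknight}): because $M$ is finitely generated, every recognizable subset of $M$ is rational, so in particular the recognizable closed inverse submonoid $L$ is rational. The implication (d) $\Rightarrow$ (e) is precisely Corollary \ref{rat_imp_fg}. Combined with the equivalence of (a), (b), (c), this exhausts the positive implications claimed in the statement.

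To see that (e) $\not\Rightarrow$ (d), I would invoke Example \ref{f2ab}: the closed inverse submonoid $K = F_2' \sqcup \{\mathbf{0}\}$ of $M = F_2 \sqcup F_2^{ab}$ is generated as a closed inverse submonoid by the singleton $\{\mathbf{0}\}$ and is therefore finitely generated in the sense of (e); yet since $F_2'$ is not finitely generated as a group, Theorem \ref{anisseif} combined with Corollary \ref{rational_clifford} shows that $K$ cannot be a rational subset of $M$.

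To see that (d) $\not\Rightarrow$ (c), I would take the free group $F_2 = F(a,b)$ regarded as an inverse monoid (whose natural partial order is trivial, so that every subgroup is automatically a closed inverse submonoid) and consider the finitely generated subgroup $H = \langle a^2, b^2 \rangle$. By Theorem \ref{anisseif}, $H$ is a rational subset of $F_2$; however, $H$ has infinite index in $F_2$, and so by the equivalence (b) $\Leftrightarrow$ (c) already established in Theorem \ref{recog_iff_findex}, $H$ fails to be a recognizable subset of $F_2$. The only delicate point in the write-up is making sure each cited result is being applied in the right generality, and then arranging the pieces in the proper logical order; there is no real obstacle beyond bookkeeping.
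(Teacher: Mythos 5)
Your assembly of the positive implications is exactly the paper's: equivalence of (a), (b), (c) by Theorem \ref{recog_iff_findex}, (c) $\Rightarrow$ (d) by McKnight's Theorem \ref{mcknight}, (d) $\Rightarrow$ (e) by Corollary \ref{rat_imp_fg}, and Example \ref{f2ab} for (e) $\not\Rightarrow$ (d). Where you genuinely depart from the paper is in the last step. The paper's proof only cites Example \ref{f2ab} as a counterexample to ``(e) implies (d)'' and ``(e) implies (b)''; since the submonoid $K$ in that example is \emph{not} rational, it says nothing about whether (d) implies (b) or (c), so the claim that the implication (c) $\Rightarrow$ (d) is not reversible is left without an explicit witness in the paper. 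Your example of $H = \langle a^2, b^2\rangle$ in $F_2$ fills exactly this gap: the natural partial order on a group is equality, so $H$ is a closed inverse submonoid and Schein cosets reduce to ordinary cosets; $H$ is finitely generated, hence rational by Theorem \ref{anisseif}, but of infinite index (a finite-index subgroup of $F_2$ of index $n$ has rank $n+1$, forcing $n=1$, yet $a \notin H$ as one sees in $F_2^{ab}$), so (b) and hence (c) fail. This is a worthwhile addition rather than mere bookkeeping, and you should keep it; the only point to make explicit in the write-up is the identification of inverse-semigroup index with group index for subgroups of a group, which is immediate since $ss^{-1}=1 \in H$ for every $s$ and $\upset{Hs}=Hs$.
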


\begin{proof}
The equivalence of (a), (b) and (c) was established in Theorem \ref{recog_iff_findex}, and that (d) implies (e)
in Corollary \ref{rat_imp_fg}.  The implication  that (c) implies (d) is McKinight's Theorem \ref{mcknight}.

Counterexamples for (e) implies (d) and (e) implies (b) are given in Example \ref{f2ab}
\end{proof}

\end{document}